\DeclareMathAlphabet{\mathpzc}{OT1}{pzc}{m}{it}
\newcommand{\R}{\mathbb{R}}      %Numeros reales
\newcommand{\F}{\mathbb{F}}
\newcommand{\Ker}{\operatorname{Ker}}
\newcommand*{\llangle}{\langle\kern-2\nulldelimiterspace\langle}
\newcommand*{\rrangle}{\rangle\kern-2\nulldelimiterspace\rangle}
\newcommand*{\Bigrrangle}{\Big\rangle\kern-3\nulldelimiterspace\Big\rangle}
\newcommand*{\Bigllangle}{\Big\langle\kern-3\nulldelimiterspace\Big\langle}
\theoremstyle{plain} % formato de teorema de toda la vida
\newtheorem{definition}{Definition}[section]
\newtheorem{lemma}[definition]{Lemma}
\newtheorem{corollary}[definition]{Corollary}
\newtheorem{theorem}[definition]{Theorem}
\newtheorem{vpx}{Variational Principle}
\newenvironment{vp}
{\begin{vpx} \pushQED{\qed}}{\popQED \end{vpx}}
\theoremstyle{definition}
\newtheorem{example}[definition]{Example}
\newtheorem{remarkth}[definition]{Remark}
\newenvironment{remark}{\begin{remarkth}}{\hfill$\Diamond$\end{remarkth}}
\begin{document}
\title[Geometric integrators for higher-order variational systems]{Geometric integrators for higher-order variational systems and their
application to optimal control}

\author[L. Colombo]{Leonardo Colombo}
\address{Leonardo Colombo: Department of Mathematics, University of Michigan, Ann Arbor, MI 48109, USA} \email{ljcolomb@umich.edu}

\author[S. Ferraro]{Sebasti\'an Ferraro}
\address{Sebasti\'an Ferraro: Departamento de Matem\'atica, Universidad Nacional del Sur, Instituto de Matem\'atica Bah\'ia Blanca, and CONICET, Av. Alem 1253, 8000 Bah\'ia Blanca, Argentina} \email{sferraro@uns.edu.ar}
\author[D.\ Mart\'\i n de Diego]{David Mart\'\i n de Diego}
\address{David Mart\'\i n de Diego:
Instituto de Ciencias Matem\'aticas (CSIC-UAM-UC3M-UCM), Campus de
Cantoblanco, UAM C/ Nicolas Cabrera, 15 - 28049 Madrid, Spain}
\email{david.martin@icmat.es}

\thanks{\noindent  \textbf{Mathematics Subject Classification} (2010):
70G45, 70Hxx, 49J15}

% 70G45: Differential-geometric methods (tensors, connections, symplectic, Poisson, contact, Riemannian, nonholonomic, etc.)
% 70Hxx: Hamiltonian and Lagrangian mechanics
% 49J15: Optimal control problems involving ordinary differential equations

% Otros posibles codigos:
%65K10      Optimization and variational techniques
%49J15      Optimal control problems involving ordinary differential equations
%70H25   Hamilton's principle

\thanks{\textbf{Keywords and phrases:} variational integrators,
higher-order mechanics, optimal control,
discrete variational calculus.}

\thanks{This work has been supported by MICINN (Spain) Grant MTM 2013-42870-P, ICMAT Severo Ochoa Project SEV-2011-0087 and
IRSES-project "Geomech-246981''. The research of S. Ferraro  has
been supported by CONICET Argentina (PIP 2010-2012 GI
11220090101018), ANPCyT Argentina (PICT 2010-2746) and SGCyT UNS}

\maketitle

\begin{abstract}
Numerical methods that preserve geometric invariants of the system, such as energy, momentum
or the symplectic form, are called geometric integrators. In this
paper we present a method to construct symplectic-momentum
integrators for higher-order Lagrangian systems. Given a regular
higher-order Lagrangian $L\colon T^{(k)}Q\to\R$ with $k\geq 1$, the
resulting discrete equations define a generally implicit numerical
integrator algorithm on $T^{(k-1)}Q\times T^{(k-1)}Q$ that
approximates the flow of the higher-order Euler--Lagrange equations
for $L$. The algorithm equations are called higher-order discrete
Euler--Lagrange equations and constitute a variational integrator for
higher-order mechanical systems. The general idea for those
variational integrators is to directly discretize Hamilton's
principle rather than the equations of motion in a way that
preserves the invariants of the original system, notably the symplectic
form and, via a discrete version of Noether's theorem, the momentum
map.

We construct an exact discrete Lagrangian $L_d^e$ using
the locally unique solution of the higher-order Euler--Lagrange equations for
$L$ with boundary conditions. By taking the discrete Lagrangian as an approximation of $L_d^e$, we obtain variational integrators for higher-order mechanical systems. We apply our techniques to optimal control
problems since, given a cost function, the optimal control problem is
understood as a second-order variational problem.
\end{abstract}

\tableofcontents

\section{Introduction}
This paper is concerned with the design of
geometric integrators for higher-order variational systems. 
The
study of higher-order variational systems has regularly attracted a lot
of attention from the applied and theoretical points of view (see \cite{LR1} and references therein). But
recently  there is a renewed interest in these systems due to new and
relevant applications in optimal control for robotics or
aeronautics, or the study of air traffic control and  computational
anatomy (\cite{
MR3111521,
MR1333770,
MR2864799,
MR2972054,
MR2861780,
HusseinBloch,
MR2580471,
MR1036158}).

A continuous higher-order system is modeled by a Lagrangian
on a higher-order tangent bundle $T^{(k)}Q$, that is, a function $L\colon
T^{(k)}Q \to {\mathbb R}$. The corresponding
Euler--Lagrange equations are a system of implicit $2k$-order
differential equations. Of course the explicit integration of most
of these Lagrangian systems is too complicated to integrate directly
or even it  is generically not possible.  In these cases, it is
necessary to discretize  the equations taking approximations  at
several points in time over the interval of integration.

Among the different numerical integrators that
one can derive for continuous higher-order systems, one of the most
successful ideas is to discretize first the variational principle
(instead of the equations of motion) and  to derive the numerical method applying discrete calculus of variations \cite{mawest,MR947601, welant}. The advantage of
this procedure is that automatically we have preservation of  some
of the geometric structures involved, like symplectic forms or
preservation of momentum, moreover, a good behavior of the
associated energy. These methods have their roots in the optimal
control literature in the 1960s \cite{MR0179019}.

In previous approaches (see for example \cite{belema, CoMdDZu2011, MR3136650}), the theory of discrete variational mechanics
for higher-order systems was derived using a discrete Lagrangian
$L_d\colon  Q^{k+1}\to \mathbb{R}$ where $Q^{k+1}$ is the
cartesian product of $k+1$ copies of the configuration manifold $Q$. There, $k+1$ points are used to approximate the positions and the higher-order
velocities (such as the standard  velocities,
 accelerations, jerks...) and to represent in this way elements of the higher-order tangent bundle $T^{(k)}Q$.

We will see in this paper that the most natural approach
is to take a discrete Lagrangian $L_d\colon  T^{(k-1)}Q\times T^{(k-1)}Q
\to{\mathbb R}$ since actually the discrete variational calculus is not based on the discretization of the Lagrangian itself, but on the discretization of the associated action.
 We will see that a suitable approximation of the action
 \begin{equation*}%\label{actionhigherordervp}
 \int^h_0 L(q, \dot{q}, \ldots, q^{(k)})\; dt
 \end{equation*}
 is given by a Lagrangian of the form $L_d\colon  T^{(k-1)}Q\times T^{(k-1)}Q \to {\mathbb R}$. Moreover, we will derive a  particular choice
  of discrete Lagrangian which gives an exact
correspondence between discrete and continuous systems, the exact discrete Lagrangian.
For instance, if we take the Lagrangian $L(q, \dot{q}, \ddot{q})=\frac{1}{2}\ddot{q}^2$, the corresponding exact  discrete Lagrangian $L_{d}^{e}\colon TQ\times TQ\to\R$ is
\begin{align*}
L^e_d(q_0, v_0, q_h, v_h)
&=\int_0^hL(q(t), \dot{q}(t), \ddot{q}(t))\; dt\\
 &=\frac{6}{h^3}(q_0-q_h)^2+\frac{6}{h^2}(q_0-q_h)(v_0+v_h)+\frac{2}{h}(v_0^2+v_0v_h+v_h^2)
\end{align*}
where $q(t)$ is the unique solution of the Euler--Lagrange equations for $L$ verifying $q(0)=q_0$, $\dot{q}(0)=v_0$, $q(h)=q_h$, $\dot{q}(h)=v_h$ for $h$ small enough (see Section
\ref{existence}).

Observe from the previous example that now this  theory of
variational integrators for higher-order systems is even simpler,
since it fits directly into the standard discrete mechanics theory
for a discrete Lagrangian of the form $L_d\colon  M\times M\to
{\mathbb R}$ where $M=T^{(k-1)}Q$. We will show that if the original
Lagrangian is regular then so is the exact discrete
Lagrangian, in the sense of \cite{mawest}. Moreover, in the
corresponding applications, for instance in optimal control theory
or splines theory, typically we are dealing with initial and final
boundary conditions which are not necessary discretized, in contrast to
previously proposed methods \cite{MR2534806,MR2448687,MR2954746}.

The paper is structured as follows. In Section 2, we show that a regular higher-order Lagrangian system has a unique solution for given nearby endpoint conditions using a direct variational proof of existence and uniqueness of the local boundary value problem, which employs a regularization procedure. In Section 3 we introduce the notion of exact discrete Lagrangian for higher-order systems and we design the construction of variational integrators for higher-order Lagrangian systems taking approximations of the exact discrete Lagrangian. We obtain the discrete Euler--Lagrange equations for a discrete Lagrangian defined in the cartesian product of two copies of $T^{(k-1)}Q$. Section 4 is devoted to the study of the relation between the discrete and continuous dynamics. We show the relation between the discrete Legendre transformations and the continuous one and we also show that the exact discrete Lagrangian associated with a higher-order regular Lagrangian is also regular. 
Finally, in Section 5, we apply our techniques to study optimal control problems for fully actuated mechanical systems.

\section{Existence and uniqueness of solutions for the boundary value problem} \label{existence}

\subsection{Higher-order tangent bundles}
First we recall some basic facts about the higher-order
tangent bundle theory. For more details see \cite{CSC} and
\cite{LR1}.

Let $Q$ be a  differentiable manifold. We introduce the following equivalence relation in the set
$C^{k}(I, Q)$ of $k$-differentiable
curves from the interval $I\subseteq \R$ to $Q$, where $0\in I$. 
By definition, two curves $\gamma_1$ and $\gamma_2$ belonging to $C^{k}(I, Q)$
have contact of order  $k$ at $q_0 = \gamma_1(0) = \gamma_2(0)$ if
there is a local chart $(\varphi, U)$ of $Q$ such that $q_0 \in U$
and
\[\frac{d^s}{dt^s}\left(\varphi \circ \gamma_1(t)\right){\Big{|}}_{t=0} =
\frac{d^s}{dt^s} \left(\varphi
\circ\gamma_2(t)\right){\Big{|}}_{t=0}\; ,\]
for all $s = 0,\dots,k$. The equivalence class of a  curve $\gamma$ will be denoted by
$[\gamma ]_0^{(k)}$. The set of equivalence classes will be denoted
by $T^{(k)}Q$
and it is not hard to show that it has a natural structure of
differentiable manifold. Moreover, $ \tau_Q^k \colon T^{(k)} Q
\to Q$ where $\tau_Q^k \big([\gamma]_0^{(k)}\big) =
\gamma(0)$ is a fiber bundle called the \textit{tangent bundle of
order $k$} of $Q$. Clearly, $T^{(1)} Q = TQ$.

From a local chart $q^{(0)}=(q^i)$ on a neighborhood $U$ of $Q$ with
$i=1,\ldots,n=\dim Q$, it is possible to induce local coordinates
 $(q^{(0)},q^{(1)},\dots,q^{(k)})$ on
$T^{(k)}U=(\tau_Q^k)^{-1}(U)\equiv U\times (\R^{n})^{k}$. Sometimes we
will resort to the usual notation $q^{(0)}\equiv (q^{i})$,
$q^{(1)}\equiv (\dot{q}^i)$ and $q^{(2)}\equiv (\ddot{q}^i)$.

There is a canonical embedding $j_{k}\colon T^{(k)}Q\to T
T^{(k-1)}Q$ defined as
$j_k([\gamma]_0^{(k)})=[{\gamma}^{(k-1)}]_0^{(1)}$, where
${\gamma}^{(k-1)}$ is the lift of the curve $\gamma$ to
$T^{(k-1)}Q$; that is,  the curve ${\gamma}^{(k-1)}\colon I\to
T^{(k-1)}Q$ is given by $\gamma^{(k-1)}(t)=[\gamma_t]_0^{(k-1)}$
where $\gamma_t(s)=\gamma(t+s)$. In local coordinates,
\[
j_k(q^{(0)},q^{(1)},q^{(2)},...q^{(k)})=(q^{(0)},q^{(1)},\dots,q^{(k-1)};q^{(1)},
q^{(2)},\dots,q^{(k)})\; .
\]

\subsection{Hamilton's principle and considerations about the existence and uniqueness of solutions}
Let $L\colon T^{(k)}Q \to \mathbb{R}$ be a Lagrangian of order $k\geq 1$, of class $C^{k+1}$. Since our result will be local, we
assume from now on that $Q$ is an open subset of $\mathbb{R}^n$. Take coordinates
$(q^{(0)}, q^{(1)}, \dots, q^{(k)})$ on $T^{(k)}Q
\equiv Q\times (\mathbb{R}^n)^k$ as before. We suppose that $L$ is regular in
the sense that the Hessian matrix
\[
\left(\frac{\partial^2 L}{\partial q^{(k)i}\partial q^{(k)j}}\right)
\]
is a regular matrix. Let also $h>0$ be given. We can formulate Hamilton's principle as follows.

\begin{vp}\label{originalvp}
Find a $C^k$ curve $q\colon [0,h] \to Q$ such that it is a
critical point of the action
\[
S_h=\int_0^h L\left( q (t), \dot q (t), \dots, q^{(k)} (t)\right) dt
\]
among those curves whose first
$k-1$ derivatives are fixed at the endpoints, that is, with given values for
$q(0), \dot q(0),\,\dots,\, q^{(k-1)}(0)$ and $q(h), \dot q(h),\,\dots,\, q^{(k-1)}(h)$.
\end{vp}

Hamilton's principle is a constrained problem in the Banach space $C^k([0,h], \mathbb{R}^n)$. Now if $q(t)$ is a solution to this problem that is not only $C^k$ but $C^{2k}$, then it satisfies the well-known $k$\textsuperscript{th}-order Euler--Lagrange equations\footnote{For $k=1$, recall writing $\delta \dot q=\dot{(\delta q)}$ when deriving the Euler--Lagrange equations, assuming that $q$ is $C^2$.}
\begin{equation}\label{hoeulerlagrangeequationsvp}
\sum_{j=0}^k(-1)^j \frac{d^j}{dt^j}\frac{\partial L}{\partial q^{(j)}}=0.
\end{equation}
For a regular Lagrangian, \eqref{hoeulerlagrangeequationsvp} can be written as an explicit $2k$-order ordinary differential equation. Existence and uniqueness of solutions for the \emph{initial} value problem can be guaranteed using basic ODE theory. Doing the same for for the boundary value problem of finding a solution $q(t)$ of \eqref{hoeulerlagrangeequationsvp} with given values for $q(0), \dot q(0),\,\dots,\, q^{(k-1)}(0)$ and $q(h), \dot q(h),\,\dots,\, q^{(k-1)}(h)$ requires different techniques. For instance, in \cite[ch.\ 9]{higherorderbook} it is shown that there exists a unique solution to an explicit $2k$-order ODE with this kind of boundary conditions, for small enough $h$ and close enough boundary values.

In principle, however, there could exist solutions to Hamilton's variational principle that are $C^k$ but not $C^{2k}$, and thus do not satisfy \eqref{hoeulerlagrangeequationsvp}. Therefore, uniqueness of solutions to the variational principle cannot yet be guaranteed. One possibility for avoiding this situation is stating Hamilton's principle in the (smaller) $C^{2k}$ context from the beginning. In this section we proceed differently, acknowledging the fact the variational principle makes sense in the $C^k$ setting. We prove local existence and uniqueness of $C^k$ solutions to Hamilton's principle from a direct variational point of view. We will see that these solutions turn out to be automatically $C^{2k}$, so they satisfy Euler--Lagrange equations \emph{a posteriori}.

Our argument for the existence and uniqueness of solutions will involve a regularization procedure which follows closely the proof by
Patrick \cite{patrick} for first-order Lagrangians; the formulas, of
course, reduce to those in \cite{patrick} for order 1, but we
introduce an additional modification using orthonormal polynomials.

\subsection{Non-regularity of Hamilton's principle}
We want to determine whether there exists a unique solution curve
to Hamilton's principle, given endpoint conditions that are
close enough. The main obstacle for a straightforward affirmative
answer is that the local boundary value problem as stated above is
nonregular at $h=0$. That is, the constraint function
$g\colon C^{k}([0,h], Q)
\to (\R^{n})^{k}\times(\R^{n})^{k}$
\[
g\colon q( \cdot ) \mapsto \left(q (0), \dot q (0), \dots, q^{(k-1)} (0);q (h), \dot q (h), \dots, q^{(k-1)} (h)\right)
\]
maps into the diagonal of $T^{(k-1)}Q \times T^{(k-1)}Q$ for $h=0$
and is not therefore a submersion. For $h\neq 0$, the constraint
function is a submersion.

The approach consists in replacing this problem by an equivalent one
that is regular at $h=0$, and show that locally there is a unique
solution to the regularized problem.

\subsection{Regularization}
First we replace the space of curves on $Q$  in the variational
problem by the space of curves on $T^{(k)}Q$, and include additional
constraints. Denote an arbitrary curve by
\[
\left(q (t)=q^{[0]} (t), q^{[1]} (t), \dots, q^{[k]} (t)\right) \in T^{(k)}Q \equiv Q\times (\mathbb{R}^n)^k,
\]
$t\in [0,h]$. Here we have modified our notation for coordinates on $T^{(k)}Q$, using superscripts in square brackets to
make a distinction with the actual derivatives of $q(t)$.

\begin{vp}
Find a curve $(q^{[0]}(t), q^{[1]} (t), \dots, q^{[k]} (t))$ on
$T^{(k)}Q$, with $q^{[l]}\in C^{k-l}([0,h], \allowbreak \mathbb{R}^n)$, $l=0,
\dots, k$, such that it is a critical point of
\[
S_h=\int_0^hL\left(q^{[0]}(t), q^{[1]} (t), \dots, q^{[k]} (t)\right)dt
\]
subject to the constraints
\[
q^{[j+1]}(t)= \frac{dq^{[j]}}{dt}(t),\quad q^{[j]}(0)=q^{[j]}_1,\quad q^{[j]}(h)=q^{[j]}_2,\quad j=0, \dots, k-1,
\]
where $(q^{[0]}_i, q^{[1]}_i, \dots, q^{[k-1]}_i)$, $i=1,2$, are
given points in $T^{(k-1)}Q$.
\end{vp}

Now reparameterize the curve by defining
\[
Q^{[j]}(u)=q^{[j]}(hu),\quad j=0,\dots,k, \quad u\in[0,1].
\]
For $h>0$, the curve $(Q^{[0]}(u), \dots, Q^{[k]} (u))$ satisfies an
equivalent variational problem as follows. Since $h$ is a constant
for each instance of the problem, we can use
\[
\frac{1}{h} \int_0^hL\left(q^{[0]}(t), q^{[1]} (t), \dots, q^{[k]} (t)\right)dt=\int_0^1L\left(Q^{[0]}(u), \dots, Q^{[k]} (u)\right)du
\]
as an objective function. The first set of constraints becomes
\[
0= \frac{dq^{[j]}}{dt}(t)-q^{[j+1]}(t)= \left(\frac{1}{h} \frac{dQ^{[j]}}{du}(u)-Q^{[j+1]}(u) \right)_{u=t/h}
\]
where $j=0, \dots, k-1$.

The reparametrized variational principle is the following.

\begin{vp}\label{reparam_vp}
Find a curve $(Q^{[0]}(u), \dots, Q^{[k]} (u))$ on $T^{(k)}Q$,
$Q^{[l]}\in C^{k-l}([0,1], \mathbb{R}^n)$, $l=0, \dots, k$, that is
a critical point of
\begin{equation*}%\label{action_reparametrized}
S=\int_0^1L\left(Q^{[0]}(u), \dots, Q^{[k]} (u)\right)du,
\end{equation*}
subject to the constraints
\begin{align}
\frac{dQ^{[j]}}{du}(u)&=hQ^{[j+1]}(u),\label{k-1_order}\\
Q^{[j]}(0)&=q^{[j]}_1,\\
Q^{[j]}(1)&=q^{[j]}_2,
\end{align}
where $j=0, \dots, k-1$, and $(q^{[0]}_i, q^{[1]}_i, \dots,
q^{[k-1]}_i)$, $i=1,2$, are given points in $T^{(k-1)}Q$.
\end{vp}
The objective $S$ does not depend on $h$, and the constraints are
smooth through $h=0$.

\begin{remark}\label{vertical_curve_h0}
For $h=0$, the constraints \eqref{k-1_order} imply that
$Q^{[0]}(u)$, \dots, $Q^{[k-1]} (u)$ remain constant, which
restricts the possible values of the endpoint conditions in order to
have a compatible set of constraints. More precisely,
$q^{[j]}_1=q^{[j]}_2$ for $j=0,\ldots,k-1$; otherwise there would be
no curves satisfying the constraints. This kind of restriction also
appears in the original variational principle \ref{originalvp}.
Moreover, the problem becomes the unconstrained problem of finding a
curve $Q^{[k]}(u)\in C^0([0,1],\R^{n})$ such that it is a critical
point of
\[\int_{0}^{1}L\left(q^{[0]},\ldots,q^{[k-1]},Q^{[k]}(u)\right)du.\]
This means
\begin{equation*}%\label{constant curve h0}
\frac{\partial L}{\partial q^{[k]}}\left(q^{[0]},q^{[1]},\ldots,q^{[k-1]},Q^{[k]}(u)\right)=0.
\end{equation*}

Differentiating with respect to $u$, and using the fact that the
Lagrangian is regular, we obtain that $Q^{[k]}(u)$ is constant.
\end{remark}

In preparation for the next step for regularization, let us solve
the constraints \eqref{k-1_order} to get
\[
Q^{[j]}(u)=Q^{[j]}(0)+h\int_0^u Q^{[j+1]}(s)\,ds,\quad j=0, \dots, k-1.
\]
This means that the functions $Q^{[j]}(u)$, $j=0, \dots, k-1$, can
be expressed in terms of $Q^{[j]}(0)$, \dots, $Q^{[k-1]}(0)$, the
function $Q^{[k]}(u)$ and $h$. For example, for $k=2$ we have
\begin{align*}
Q^{[1]}(u)&=Q^{[1]}(0)+h\int_0^u Q^{[2]}(s)\,ds,\\
Q^{[0]}(u)&=Q^{[0]}(0)+h\int_0^u Q^{[1]}(s)\,ds\\
&=Q^{[0]}(0)+huQ^{[1]}(0)+h^2\int_0^u\int_0^s Q^{[2]}(\tau)\,d\tau\,ds\\
&=Q^{[0]}(0)+huQ^{[1]}(0)+h^2\int_0^u(u-\tau) Q^{[2]}(\tau)\,d\tau.\\
\end{align*}

For a general $k$, and for $j=0, \dots, k-1$, an iterated change of
order of integration yields
\begin{equation}\label{Qj_in_terms_of_Qk}
Q^{[j]}(u)=Q^{[j]}(0)+\sum_{i=1}^{k-j-1}\frac{h^iu^i}{i!}Q^{[j+i]}(0)+h^{k-j}\int_0^u \frac{(u-s)^{k-j-1}}{(k-j-1)!}Q^{[k]}(s)\,ds.
\end{equation}
If the upper bound of summation is less than the lower bound, the sum is understood to be 0.

Note that taking $u=1$, the final endpoint data $(q^{[0]}_2, \dots, q^{[k-1]}_2)$
can now be written as
\begin{equation}\label{q2_as_poly_integral}
q^{[j]}_2=Q^{[j]}(1)=q^{[j]}_1+\sum_{i=1}^{k-j-1}\frac{h^i}{i!}q^{[j+i]}_1+h^{k-j}\int_0^1 \frac{(1-s)^{k-j-1}}{(k-j-1)!}Q^{[k]}(s)\,ds,
\end{equation}
so we define
\begin{equation}\label{def_of_z}
  z^{[j]}=\int_0^1 \frac{(1-s)^{k-j-1}}{(k-j-1)!}Q^{[k]}(s)\,ds=\frac{1}{h^{k-j}}\left(q^{[j]}_2-
\sum_{i=0}^{k-j-1}\frac{h^i}{i!}q^{[j+i]}_1\right).
\end{equation}
We will discuss the case $h=0$ in Remark \ref{blowup}.

Now replace the curves and endpoint data by just $Q^{[k]}(u)$,
$(q^{[0]}_1, \dots, q^{[k-1]}_1)$, and $(z^{[0]}, \dots,
z^{[k-1]})$, to get a new variational principle.

\begin{vp}\label{vp_with_z}
Given $h$, $(q^{[0]}_1, \dots, q^{[k-1]}_1)$ and $(z^{[0]}, \dots,
z^{[k-1]})$, find a continuous curve $Q^{[k]}\colon [0,1] \to
\mathbb{R}^n$ that is a critical point of
\[
\mathcal{S}=\int_0^1L\left(Q^{[0]}(u), \dots, Q^{[k]} (u)\right)du,
\]
where $Q^{[0]}(u)$, \dots, $Q^{[k-1]}(u)$ are defined as in
\eqref{Qj_in_terms_of_Qk} by
\begin{equation*}
Q^{[j]}(u)=q^{[j]}_1+\sum_{i=1}^{k-j-1}\frac{h^iu^i}{i!}q^{[j+i]}_1+h^{k-j}\int_0^u \frac{(u-s)^{k-j-1}}{(k-j-1)!}Q^{[k]}(s)\,ds,\quad j=0,\ldots, k-1
\end{equation*}
subject to the constraints
\begin{equation*}%\label{constraints_z}
  \int_0^1 \frac{(1-s)^{k-j-1}}{(k-j-1)!}Q^{[k]}(s)\,ds=z^{[j]},\quad j=0, \dots, k-1.\qedhere
\end{equation*}
\end{vp}

Observe that the constraint functions do not depend on $h$ and are linear on the curve $Q^{[k]}$.
This variational principle is already regular through $h=0$, as we will see when we proceed to find the solutions later.

\begin{remark}\label{blowup}
The data $q^{[0]}_1$, \dots, $q^{[k-1]}_1$, $z^{[0]}$, \dots,
$z^{[k-1]}$ can be transformed into the endpoint conditions for the
variational principle \ref{reparam_vp} in a straightforward way, for
any $h$, using \eqref{q2_as_poly_integral} and \eqref{def_of_z}. The
converse \eqref{def_of_z} is possible only for $h\neq 0$, in
principle. However, if $h=0$ let $(Q^{[0]}(u),\ldots,Q^{[k]}(u))$ a
solution for the variational principle \ref{reparam_vp} with
boundary conditions $(q_{1}^{[0]},\ldots, q^{[k-1]}_1)$ and
$(q^{[0]}_2,\ldots,\allowbreak q^{[k-1]}_2)$. Define $z^{[j]}$ by the constraint
in \eqref{vp_with_z}. Since $Q^{[k]}$ is constant and
$\frac{(1-s)^{k-j-1}}{(k-j-1)!}>0$ in $(0,1)$, to different values
of $Q^{[k]}$ correspond different values of $z^{[j]}$. Then
$Q^{[k]}$ is a solution of \ref{vp_with_z} with boundary
conditions $q^{[0]}_1$, \dots, $q^{[k-1]}_1$, $z^{[0]}$, \dots,
$z^{[k-1]}$.
\end{remark}

Finally, we will introduce a modification that will enable us to
carry out the computations in the next section easily. Consider the
inner product on $C^0([0,1],\mathbb{R})$ given by
\[
\langle f, g \rangle=
\int_0^1 f(s) g(s)\,ds.
\]

If $f\in C^0([0,1],\mathbb{R})$ and $V=(V_1, \dots, V_n)\in
C^0([0,1],\mathbb{R}^n)$ we define the bilinear operation
\begin{equation*}%\label{vector_inner_prod}
\langle f, V \rrangle=
\int_0^1 f(s) V(s)\,ds=\left(\langle f, V_0\rangle, \dots,\langle f, V_n\rangle\right)\in \mathbb{R}^n.
\end{equation*}

Then the integrals appearing in the constraints in the variational
principle \ref{vp_with_z} are $\langle a^{[k]}_j, Q^{[k]}\rrangle$,
where $a^{[k]}_j$ are the polynomials
\[
a^{[k]}_j(s)=\frac{(1-s)^{k-j-1}}{(k-j-1)!},\quad j=0, \dots, k-1.
\]
These form a basis of the space of polynomials of degree at most
$k-1$. Let us consider a basis $b^{[k]}_j(s)$, $j=0, \dots, k-1$, of
the same space of polynomials consisting of orthonormal polynomials
on $[0,1]$, and let $(\gamma^{[k],i}_j)$, where $i,j=0, \dots, k-1$,
be the invertible real matrix such that
$a^{[k]}_j(s)=\gamma^{[k],i}_jb^{[k]}_i(s)$. For example, for $k=2$,
\[
a^{[2]}_0(s)=1-s,\quad a^{[2]}_1(s)=1,
\]
and we can take for instance the orthonormal basis
\[
b^{[2]}_0(s)=\sqrt{3}(1-2s),\quad b^{[2]}_1(s)=1;
\]
therefore,
\[
\gamma^{[2],0}_0=\textstyle\frac{1}{2\sqrt{3}},\quad
\gamma^{[2],1}_0=\frac{1}{2},\quad
\gamma^{[2],0}_1=0,\quad
\gamma^{[2],1}_1=1.
\]

Using this matrix, the constraints can be rewritten as
\begin{equation*}
z^{[j]}=\langle a^{[k]}_j,Q^{[k]}\rrangle=\gamma^{[k],i}_j\langle b^{[k]}_i(s),Q^{[k]}\rrangle,
\end{equation*}
for $j=0, \dots, k-1$. This allows us to reformulate the variational
principle in an equivalent way by replacing the data $(z^{[0]},
\dots, z^{[k-1]})$ and constraints $\langle
a^{[k]}_j,Q^{[k]}\rrangle=z^{[j]}$ by new data $(w^{[0]}, \dots,
w^{[k-1]})$ and constraints $\langle
b^{[k]}_j,Q^{[k]}\rrangle=w^{[j]}$, $j=0, \dots, k-1$. The old and
new data are related by
\begin{equation}\label{cambio de base}
\sum_{i=0}^{k-1} \gamma^{[k],i}_j w^{[i]}=z^{[j]}.
\end{equation}

\begin{vp}\label{vp_with_w}
Given $h$, $(q^{[0]}_1, \dots, q^{[k-1]}_1)$ and $(w^{[0]}, \dots,
w^{[k-1]})$, find a continuous curve $Q^{[k]}\colon [0,1] \to
\mathbb{R}^n$ that is a critical point of
\[
S_h=\int_0^1L\left(Q^{[0]}(u), \dots, Q^{[k]} (u)\right)\,du,
\]
where $Q^{[0]}(u)$, \dots, $Q^{[k-1]}(u)$ are defined by
\begin{equation}\label{defofQju}
Q^{[j]}(u)=q^{[j]}_1+\sum_{i=1}^{k-j-1}\frac{h^iu^i}{i!}q^{[j+i]}_1+h^{k-j}\int_0^u \frac{(u-s)^{k-j-1}}{(k-j-1)!}Q^{[k]}(s)\,ds,
\end{equation}
subject to the constraints
\begin{equation*}%\label{constraints_z}
  \int_0^1 b^{[k]}_j(s)Q^{[k]}(s)\,ds=w^{[j]},\quad j=0, \dots, k-1.\qedhere
\end{equation*}
\end{vp}

\subsection{Solution of the regularized
problem}\label{solregularized}

Let $S_h$ be given as in the variational principle \ref{vp_with_w},
regarded as a real-valued map defined on the Banach space
$C^0([0,1],\mathbb{R}^n)$ of curves $Q^{[k]}(u)$. We can also consider its restriction to the Banach space $C^k([0,1],\mathbb{R}^n)$. We are going to
use the following lemma \cite{AbMarsdRat}.

\begin{lemma}[Omega Lemma]
Let $E,F$ be Banach spaces, $U$ open in $E$, and $M$ a compact
topological space. Let $g\colon U \to F$ be a $C^r$ map, $r>0$. The
map
\[
\Omega_g\colon C^0(M,U) \to C^0(M,F)\quad \text{defined by}\quad \Omega_g(f)=g\circ f
\]
is also $C^r$, and $D\Omega_g(f)\cdot h=[(Dg)\circ f]\cdot h$.
\end{lemma}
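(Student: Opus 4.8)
The plan is to prove the Omega Lemma by induction on $r$, treating $r$ as a positive integer (the case $r=\infty$ then following), after first disposing of the topological preliminaries and the case $r=0$. First I would record the Banach-space setting: since $M$ is compact and $E$ complete, $C^0(M,E)$ with the supremum norm $\|f\|_\infty=\sup_{m\in M}\|f(m)\|_E$ is a Banach space, and $C^0(M,U)$ is open in it, because for $f\in C^0(M,U)$ the image $f(M)$ is compact in $U$ and hence at positive distance from $E\setminus U$, so a sup-norm ball around $f$ remains inside $C^0(M,U)$. Next I would settle the $r=0$ statement, that $\Omega_g$ is well defined and continuous whenever $g$ is continuous: well-definedness is just that a composition of continuous maps is continuous, while continuity of $\Omega_g$ follows from uniform continuity of $g$ on a fixed compact neighborhood of $f(M)$, which turns a modulus of continuity for $g$ into a sup-norm modulus for $\Omega_g$.

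For the first-order case I would propose the candidate derivative $L_f\colon h\mapsto (Dg\circ f)\cdot h$, that is, the bounded linear operator sending $h$ to the curve $m\mapsto Dg(f(m))\,h(m)$. Evaluating the remainder pointwise and using the integral form of Taylor's theorem for the $C^1$ map $g$,
\[
g(f(m)+h(m))-g(f(m))-Dg(f(m))\,h(m)=\int_0^1\bigl[Dg(f(m)+t\,h(m))-Dg(f(m))\bigr]h(m)\,dt,
\]
so the $F$-norm of the left-hand side is at most $\|h(m)\|_E\,\sup_{t\in[0,1]}\|Dg(f(m)+t\,h(m))-Dg(f(m))\|$. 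Taking the supremum over $m\in M$ and using uniform continuity of $Dg$ on a fixed compact neighborhood of $f(M)$ shows this remainder is $o(\|h\|_\infty)$, which establishes Fréchet differentiability of $\Omega_g$ with $D\Omega_g(f)=L_f$, i.e.\ the stated formula.

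The structural key for the induction is that $f\mapsto D\Omega_g(f)$ factors as $\iota\circ\Omega_{Dg}$, where $\Omega_{Dg}\colon C^0(M,U)\to C^0(M,L(E,F))$ is the Omega map of the derivative $Dg$, and $\iota\colon C^0(M,L(E,F))\to L\bigl(C^0(M,E),C^0(M,F)\bigr)$ is the natural map $\varphi\mapsto\bigl(h\mapsto(m\mapsto\varphi(m)\,h(m))\bigr)$. Since $\iota$ is linear and bounded (in fact isometric onto its image), it is $C^\infty$. Now the induction runs as follows: for $r=1$, continuity of $D\Omega_g=\iota\circ\Omega_{Dg}$ follows from the $r=0$ case already applied to $Dg$, so $\Omega_g$ is $C^1$; for $r>1$, the inductive hypothesis applied to $Dg$, which is $C^{r-1}$, gives that $\Omega_{Dg}$ is $C^{r-1}$, hence $D\Omega_g=\iota\circ\Omega_{Dg}$ is $C^{r-1}$ and $\Omega_g$ is $C^r$.

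I expect the main obstacle to be the uniformity in the first-order estimate: the pointwise Taylor remainder must be controlled simultaneously for all $m\in M$ while the perturbation $h$ is itself a varying function. The compactness of $M$ together with uniform continuity of $Dg$ on a compact neighborhood of $f(M)$ is exactly what upgrades the pointwise bound to a sup-norm bound; the delicate book-keeping is to pin down a neighborhood absorbing all $f(m)+t\,h(m)$ once $\|h\|_\infty$ is small. The secondary point to verify carefully is that $\iota$ is genuinely a bounded linear map, so that the clean induction through the derivative goes through.
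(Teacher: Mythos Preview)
Your argument is correct and follows the standard route (first-order remainder via the integral mean-value formula plus uniform continuity of $Dg$ on a compact neighborhood of $f(M)$, then induction through the factorization $D\Omega_g=\iota\circ\Omega_{Dg}$ with $\iota$ bounded linear). Note, however, that the paper does not give its own proof of this lemma: it is quoted from \cite{AbMarsdRat} and used as a black box, so there is nothing in the paper to compare against beyond the bare statement. Your proof is essentially the one found in that reference.
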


The objective $S_h$ is the composition of the maps
\begin{center}\leavevmode
\xymatrix{ C^0([0,1], \mathbb{R}^n)\ar[r]^-i&
C^0([0,1],T^{(k)}Q)\ar[r]^-{\Omega_L}&
C^0([0,1],\mathbb{R})\ar[r]^-{\int}&\mathbb{R} }
\end{center}
where $i$ is defined by $Q^{[k]}(u) \mapsto (Q^{[0]}(u), \dots,
Q^{[k]} (u))$. Here $Q^{[0]}(u),\dots$, $Q^{[k-1]} (u)$ stand for
the right-hand sides of \eqref{defofQju}. Both $i$ and $\int$ are
bounded affine and therefore $C^\infty$. By the Omega Lemma,
$\Omega_L$ is $C^{k+1}$ because $L$ is $C^{k+1}$, and therefore so is $S_h$.

If we regard $S_h$ as defined on $C^k([0,1],\mathbb{R}^n)$, we should append the inclusion $C^k([0,1],\mathbb{R}^n)\hookrightarrow C^0([0,1],\mathbb{R}^n)$ to the left side of the diagram above. This inclusion is $C^\infty$ because it is linear and bounded ($\|Q^{[k]}\|_{C^0}\leq \|Q^{[k]}\|_{C^k}$ for all $Q^{[k]}$). Then $S_h$ is $C^{k+1}$ also as a map defined on $C^k([0,1],\mathbb{R}^n)$. In order to cover both cases, from now on $l$ will denote $0$ or $k$ interchangeably.

We need a suitable notion of the gradient of $S_h$, in order to find where it is perpendicular to the constraint space. In order to do that, let us first compute $\mathbf{d}S_h[Q^{[k]}(u)]$, for $Q^{[k]}$ of class $C^l$. The functions $Q^{[0]}(u)$,
\dots, $Q^{[k-1]}(u)$ are defined by \eqref{defofQju}. Since $S_h$ is smooth, we will compute $\mathbf{d}S_h$ using directional derivatives. For an arbitrary $\delta Q^{[k]}$ of class $C^l$, take a
deformation $Q^{[k]}_\epsilon(u)=Q^{[k]}(u)+\epsilon \delta
Q^{[k]}(u)$ of $Q^{[k]}(u)$. For $j=0, \dots, k-1$, define the
corresponding lower order curves as in  \eqref{defofQju} by
\begin{equation}\label{Qjepsilon}
Q^{[j]}_\epsilon(u)=
q^{[j]}_1+\sum_{i=1}^{k-j-1}\frac{h^iu^i}{i!}q^{[j+i]}_1+h^{k-j}\int_0^u \frac{(u-s)^{k-j-1}}{(k-j-1)!} Q^{[k]}_\epsilon(s)\,ds,
\end{equation}
so $Q^{[j]}_0(u)=Q^{[j]}(u)$ and
\begin{equation*}
\left.\frac{d}{d\epsilon}\right|_{\epsilon=0}Q^{[j]}_\epsilon(u)=
h^{k-j}\int_0^u \frac{(u-s)^{k-j-1}}{(k-j-1)!}\delta Q^{[k]}(s)\,ds.
\end{equation*}

Denoting $a^{[k]}_j(u,s)={(u-s)^{k-j-1}}/{(k-j-1)!}$ and
$Q(u)=(Q^{[0]}(u), \dots$, $Q^{[k]} (u))$ for short, we have
%\footnotesize
\begin{align*}
  &\mathbf{d}S_h[Q^{[k]}(u)]\cdot \delta Q^{[k]}(u)=\\
 &=\left.\frac{d}{d\epsilon}\right|_{\epsilon=0}\int_0^1L\left(Q^{[0]}_\epsilon(u), \dots, Q^{[k]}_\epsilon (u)\right)du\\
  &=\int_0^1 \left(\sum_{j=0}^{k-1} \frac{\partial L}{\partial q^{[j]}}(Q(u))h^{k-j}\int_0^u a^{[k]}_j(u,s) \delta Q^{[k]}(s)\,ds+\frac{\partial L}{\partial q^{[k]}}(Q(u))\delta Q^{[k]}(u)\right)du\\
 &=\sum_{j=0}^{k-1}\int_0^1\int_s^1\frac{\partial L}{\partial q^{[j]}}(Q(u))h^{k-j} a^{[k]}_j(u,s) \delta Q^{[k]}(s)\,du\,ds
 +\int_0^1 \frac{\partial L}{\partial q^{[k]}}(Q(u))\delta Q^{[k]}(u)\,du\\
 &=\sum_{j=0}^{k-1}\int_0^1\int_u^1\frac{\partial L}{\partial q^{[j]}}(Q(s))h^{k-j} a^{[k]}_j(s,u) \delta Q^{[k]}(u)\,ds\,du
 +\int_0^1 \frac{\partial L}{\partial q^{[k]}}(Q(u))\delta Q^{[k]}(u)\,du\\
 &=\int_0^1\left(\sum_{j=0}^{k-1}\int_u^1\frac{\partial L}{\partial q^{[j]}}(Q(s))h^{k-j} a^{[k]}_j(s,u)\,ds
+ \frac{\partial L}{\partial q^{[k]}}(Q(u))\right)\delta Q^{[k]}(u)\,du.\\
\end{align*}

For each $u\in[0,1]$, the first factor in the integrand of the last
expression is in $(\R^{n})^{*}$. If $\sharp\colon (\R^{n})^{*}\to\R^{n}$
denotes the index raising operator associated to the Euclidean inner
product, define
\begin{equation*}%\label{defofnablaSh}
\nabla S_h[Q^{[k]}(u)](u):=\left(\sum_{j=0}^{k-1}\int_u^1\frac{\partial L}{\partial q^{[j]}}(Q(s))h^{k-j} a^{[k]}_j(s,u)\,ds
+ \frac{\partial L}{\partial q^{[k]}}(Q(u))\right)^{\sharp}.
\end{equation*}

Since ${\partial L}/{\partial q^{[0]}}$, \dots, ${\partial
L}/{\partial q^{[k]}}$ are $C^{k}$ and the curve $Q$ is $C^l$ ($l=0$ or $l=k$) , then $\nabla S_h[Q^{[k]}(u)]$
is $C^l([0,1],\mathbb{R}^n)$.  Then we have a vector
field
\[\nabla S_h\colon C^l([0,1],\mathbb{R}^n)\to
C^{l}([0,1],\mathbb{R}^n)\] which we call the gradient of $S_h$. By the Omega Lemma, $\nabla S_h$ is a $C^k$ map.

Let us now compute the tangent space to the constraint set. If we consider the inner product on
$C^{l}([0,1],\mathbb{R}^n)$ given by
\[
\llangle V,W \rrangle=\int_0^1 V(u)\cdot W(u)\,du,
\]
then
\[\mathbf{d}S_h[Q^{[k]}(u)]\cdot \delta Q^{[k]}(u)=\llangle
\nabla S_h[Q^{[k]}(u)],\delta Q^{[k]}(u)\rrangle.\]

The constraints $g_j[Q^{[k]}(s)]:=\langle
b^{[k]}_j,Q^{[k]}\rrangle=w^{[j]}$, $j=0, \dots, k-1$, in the
variational principle \ref{vp_with_w} are bounded and linear, and
therefore $C^\infty$, and the corresponding derivatives are the same
functions $g_j$. Define
\[g=(g_0, \dots, g_{k-1})\colon C^l([0,1],\mathbb{R}^n) \to (\mathbb{R}^n)^k\]
so
\[
E=\Ker g \subset C^l([0,1],\mathbb{R}^n)
\]
is the tangent space to the constraint set. They are actually parallel since the constraints are linear. It is not difficult to show
using the definitions that the space
\[
E^\perp=\{c^jb^{[k]}_j \,|\, c^0, \dots, c^{k-1}\in \mathbb{R}^n\}
\]
of $\mathbb{R}^n$-valued polynomials of degree at most $k-1$ is
indeed the $\llangle,\rrangle$-orthogonal complement of $E$, which
is then a split subspace (see the Appendix for a proof). The orthogonal projection $P\colon
C^l([0,1],\mathbb{R}^n)=E\oplus E^\perp \to E$ is given by
\[
P(\delta Q^{[k]}(u))=\delta Q^{[k]}(u)-\sum_{j=0}^{k-1}\langle b^{[k]}_j, \delta Q^{[k]}\rrangle \, b^{[k]}_j.
\]

Now $S_h$ has a critical point on the constraint set (for any value
of the constraints) if and only if the projection $P\nabla S_h$ of
$\nabla S_h$ to the tangent space $E$ of the constraint set is $0$.
That is, in order to find solutions to the variational principle
\ref{vp_with_w}, we solve
\[
P\nabla S_h(Q^{[k]})=P\nabla S_h(Q^{[k]}_E\oplus Q^{[k]}_{E^\perp})=0
\]
for $Q^{[k]}_E$, near
\begin{align*}
Q^{[k]}=0,\quad w^{[0]}= \dots = w^{[k-1]}=0,\\
q^{[0]}_1=\bar q^{[0]}, \dots, q^{[k-1]}_1=\bar q^{[k-1]},\quad h=0.
\end{align*}

This can be solved using the implicit function theorem by requiring
that the partial derivative of $P\nabla S_h(Q^{[k]})$ at
the point $Q^{[k]}=0$ with respect to the space $E$ is a linear isomorphism. The variables $w^{[0]}, \dots, w^{[k-1]}$, $q^{[0]}_1, \dots, q^{[k-1]}_1$ and $h$ are seen as parameters that can move in some neighborhood. Note that it is not necessary to solve for $Q^{[k]}_{E^\perp}$ since it is completely determined by $w^{[0]}, \dots, w^{[k-1]}$ using the constraint equations in variational principle \ref{vp_with_w}.

In order to compute this partial
derivative, take a deformation of $Q^{[k]}=0$ of the form
$Q^{[k]}_\epsilon=\epsilon \delta Q^{[k]}_E$, where $\delta
Q^{[k]}_E\in E$. Recalling \eqref{Qjepsilon} and noting that $h=0$,
we have
\begin{align*}
\left.\frac{d}{d\epsilon}\right|_{\epsilon=0}&P\frac{\partial L}{\partial q^{[k]}}(Q^{[0]}_\epsilon(u), \dots, Q^{[k]}_\epsilon(u))=\left.\frac{d}{d\epsilon}\right|_{\epsilon=0}P\frac{\partial L}{\partial q^{[k]}}(\bar q^{[0]}, \dots, \bar q^{[k-1]},Q^{[k]}_\epsilon(u))\\
&=P\frac{\partial^2 L}{\partial q^{[k]2}}(\bar q^{[0]}, \dots, \bar q^{[k-1]},0)\delta Q^{[k]}_E(u)=\frac{\partial^2 L}{\partial q^{[k]2}}(\bar q^{[0]}, \dots, \bar q^{[k-1]},0)\delta Q^{[k]}_E(u)\\
&\quad
-\sum_{j=0}^{k-1}\Big\langle b^{[k]}_j,
\frac{\partial^2 L}{\partial q^{[k]2}}(\bar q^{[0]}, \dots, \bar q^{[k-1]},0)\delta Q^{[k]}_E
\Bigrrangle \, b^{[k]}_j=\frac{\partial^2 L}{\partial q^{[k]2}}(\bar q^{[0]}, \dots, \bar q^{[k-1]},0)\delta Q^{[k]}_E(u).
\end{align*}

Here the inner products vanish because $\frac{\partial^2 L}{\partial
q^{[k]2}}(\bar q^{[0]}, \dots, \bar q^{[k-1]},0)$ is a constant
matrix (that is, it does not depend on $u$) and $\langle
b^{[j]},\delta Q^{[k]}_E\rrangle=0$ for $j=0, \dots, k-1$.

Then the derivative is precisely $\frac{\partial^2 L}{\partial
q^{[k]2}}(\bar q^{[0]}, \dots, \bar q^{[k-1]},0)$, seen as a linear
map from $E$ into itself, and if $L$ is regular then it is an
isomorphism.

By the implicit function theorem, there are neighborhoods
$W_1\subseteq (\mathbb{R}^n)^k\times (\mathbb{R}^n)^k\times
\mathbb{R}$ (with variables $(q^{[0]}_1, \dots, q^{[k-1]}_1;w^{[0]},
\dots, w^{[k-1]};h)$) containing $(\bar q^{[0]}$, $\dots, \bar
q^{[k-1]};0, \dots, 0;0)$ and $W_2^{l}\subseteq
C^l([0,1],\mathbb{R}^n)$ containing the constant curve
$Q^{[k]}(u)=0$, and a $C^{k}$ map $\psi\colon W_1 \to W_2^{l}$
such that for each $(q^{[0]}_1, \dots, q^{[k-1]}_1;w^{[0]}$, $\dots,
w^{[k-1]};h)\in W_1$, the curve
\[
Q^{[k]}=\psi(q^{[0]}_1, \dots, q^{[k-1]}_1;w^{[0]}, \dots, w^{[k-1]};h)\in C^l([0,1],\mathbb{R}^n)
\]
is the unique critical point in $W_2^{l}$ of the variational problem
\ref{vp_with_w}. Thus, $\psi$ maps initial conditions, constraint values (which encode the final endpoint conditions for the original problem) and $h$ into $C^l$ curves.  

Let us now consider the cases $l=0$ and $l=k$ separately. Taking $l=k$, $\psi$ has values in $W_2^{k}\subseteq
C^{k}([0,1],\mathbb{R}^n)$. Taking $l=0$, $\psi$ has values in
$W_2^{0}\subseteq C^{0}([0,1],\mathbb{R}^n)$. However, since
$C^{k}([0,1],\mathbb{R}^n)\subset C^{0}([0,1],\mathbb{R}^n)$, this $\psi$ also provides the unique
solution among the $C^0$ curves in a $C^0$-open neighborhood of the
curve $u \mapsto 0$, say $\{Q^{[k]}(u)\,|\,\|Q^{[k]}\|_0<\epsilon\}$.

Let us now reverse the regularization in order to obtain a unique
$C^{2k}$ solution of the variational principle \ref{originalvp}. Let $h\neq 0$. For
$(q_1,q_2)=((q_{1}^{[0]},\ldots,q_{1}^{[k-1]}),(q_{2}^{[0]},\ldots,q_{2}^{[k-1]}))\in
(\R^{n})^{k}\times (\R^{n})^{k}$ the corresponding values of
$z^{[0]},\ldots,z^{[k-1]}$ are given by \eqref{def_of_z} and the
values of $w^{[0]},\ldots,w^{[k-1]}$ can be computed from
\eqref{cambio de base} using the inverse matrix of
$\left(\gamma_{j}^{[k],i}\right)$. This defines a smooth function
$(w^{[0]},\ldots,w^{[k-1]})=\varpi(q_1,q_2,h)$. Note that the condition that $q_1$ and $q_2$ are close translates into the condition that $(w^{[0]},\ldots,w^{[k-1]})$ is close to 0.

Let $h>0$ be such that $(\bar q^{[0]}, \dots, \bar q^{[k-1]};0,\dots,0;h)\in W_1$. Define
\[
\widetilde W_1=\{(q_1,q_2)\in (\mathbb{R}^n)^k \times (\mathbb{R}^n)^k\,|\, (q_1;\varpi(q_1,q_2,h);h)\in W_1\}
\]
and for each $(q_1,q_2)=\left((q^{[0]}_1, \dots,
q^{[k-1]}_1),(q^{[0]}_2, \dots,  q^{[k-1]}_2)\right)\in W_1$ define
the curve $Q^{[0]}_{(q_1,q_2)}(u)$ according to
\eqref{Qj_in_terms_of_Qk} as
\begin{equation*}
Q^{[0]}_{(q_1,q_2)}(u)=\sum_{i=0}^{k-1}\frac{h^iu^i}{i!}q^{[i]}_1+h^{k}\int_0^u \frac{(u-s)^{k-1}}{(k-1)!} \psi\left(q_1;\varpi(q_1,q_2,h);h\right)(s)\,ds.
\end{equation*}
Since $\psi$ takes values in the $C^k$ curves,
$Q^{[0]}_{(q_1,q_2)}(u)$ is $C^{2k}$ by the reasoning leading to
equation \eqref{Qj_in_terms_of_Qk}.

Now reparameterize with $t=hu$ to get a $C^{2k}$ curve
 \begin{equation*}
q^{[0]}_{(q_1,q_2)}(t)=\sum_{i=0}^{k-1}\frac{t^i}{i!}q^{[i]}_1+\left(\frac{t}{u}\right)^{k}\int_0^{t/h} \frac{(t/h-s)^{k-1}}{(k-1)!} \psi\left(q_1;\varpi(q_1,q_2,h);h\right)(s)\,ds
\end{equation*}
on $Q$, defined for $t\in [0,h]$. This curve is the unique solution
of the variational principle \ref{originalvp} with endpoint
conditions $q_1$ and $q_2$.

This solution is $C^{2k}$, and unique among the curves corresponding to $Q^{[k]}$
continuous with $\|Q^{[k]}\|_0<\epsilon$. These are the $C^k$ curves
$q(t)$ on $Q$ with $\|q^{(k)}\|_0<\epsilon/h^k$, which are the $C^k$
curves in some $C^k$ neighborhood of the constant curve $t \mapsto
\bar q^{[0]}$.

\section{The exact discrete Lagrangian and discrete equations for second-order systems}\label{section2}

Next, we will consider second-order Lagrangian systems, motivated by the study of optimal control problems. Let $Q$ be a configuration manifold and let $L\colon T^{(2)}Q\to\R$ be a
regular Lagrangian.

\begin{definition} Given a small enough\footnote{By this we mean, from now on, that there exists $h_0>0$ such that for all $h\in(0,h_0)$ the definition or proof holds.} $h>0$, the \textit{exact discrete lagrangian}
$L_d^{e}\colon TQ\times TQ\to\R$ is defined by
\[L_d^{e}(q_0,\dot{q}_0,q_1,\dot{q}_1)=\int_{0}^{h}L(q(t),\dot{q}(t),\ddot{q}(t))dt,\]
where $q\colon [0,h]\to Q$ is the unique solution of the
Euler--Lagrange equations for the second-order Lagrangian $L$,
\begin{equation*}
\frac{d^2}{dt^2}\frac{\partial L}{\partial\ddot{q}}-\frac{d}{dt}\frac{\partial L}{\partial\dot{q}}+\frac{\partial L}{\partial q}=0,
\end{equation*}
satisfying the boundary conditions
$q(0)=q_0,q(h)=q_1,\dot{q}(0)=\dot{q}_0$ and $\dot{q}(h)=\dot{q}_1$.
\end{definition}

Strictly speaking, the exact discrete Lagrangian is defined not on $TQ\times TQ$ but on a neighborhood of the diagonal. For the sake of simplicity, we will not make this distinction. Our idea is to take a discrete Lagrangian $L_{d}\colon TQ\times TQ\to\R$ as an approximation of $L_{d}^{e}\colon TQ\times TQ\to\R$, to construct variational integrators in
the same way as in discrete mechanics (see section
\ref{section3}). In other words, for given $h>0$ we define
$L_d(q_0,v_0,q_1,v_1)$ as an approximation of the action integral
along the exact solution curve segment $q(t)$ with boundary
conditions $q(0)=q_0$, $\dot{q}(0)=v_0$, $q(h)=q_1$, and
$\dot{q}(h)=v_1$. For example, we can use the formula
\[L_d(q_0,v_0,
q_1,v_1)=hL\left(\kappa(q_0,v_0,q_1,v_1),\chi(q_0,v_0,q_1,v_1),\zeta(q_0,v_0,q_1,v_1)\right),\]
where $\kappa$, $\chi$ and $\zeta$ are functions of
$(q_0,v_0,q_1,v_1)\in TQ\times TQ$ which approximate the
configuration $q(t)$, the velocity $\dot{q}(t)$ and
the acceleration $\ddot{q}(t)$, respectively, in terms
of the initial and final positions and velocities. We can also, for
instance, consider suitable linear combinations of discrete
Lagrangians of this type, for instance, weighted averages of the
type
\[L_{d}(q_0,v_0,q_1,v_1)=\frac{1}{2}L\left(q_0,v_0,\frac{v_1-v_0}{h}\right)+\frac{1}{2}L\left(q_1,v_1,\frac{v_1-v_0}{h}\right),\]
or other combinations.

For completeness, we will derive the discrete equations for the
Lagrangian $L_{d}\colon TQ\times TQ\to\R$, but these results are a direct
translation of Marsden and West \cite{mawest} to our case.

Given the grid $\{t_{k}=kh\mid k=0,\ldots,N\}$, $Nh=T$,  define
the discrete path space
$\mathcal{P}_{d}(TQ):=\{(q_{d},v_{d})\colon \{t_{k}\}_{k=0}^{N}\to TQ\}$.
We will identify a discrete trajectory
$(q_{d},v_{d})\in\mathcal{P}_{d}(TQ)$ with its image
$(q_{d},v_{d})=\{(q_{k},v_{k})\}_{k=0}^{N}$ where
$(q_{k},v_{k}):=(q_{d}(t_{k}),v_{d}(t_{k}))$. The discrete action
$\mathcal{A}_{d}\colon \mathcal{P}_{d}(TQ)\to\R$ along this sequence is
calculated by summing the discrete Lagrangian evaluated at each pair of adjacent points of the discrete path, that is,
\[\mathcal{A}_d(q_{d},v_{d}):=\sum_{k=0}^{N-1}L_d(q_k,v_k,q_{k+1},v_{k+1}).\]
We would like to point out that the discrete path space is
isomorphic to the smooth product manifold which consists on $N+1$
copies of $TQ$, the discrete action inherits the smoothness of the
discrete Lagrangian, and the tangent space
$T_{(q_{d},v_{d})}\mathcal{P}_{d}(TQ)$ at $(q_{d},v_{d})$ is the set
of maps $a_{(q_{d},v_{d})}\colon \{t_{k}\}_{k=0}^{N}\to TTQ$ such that
$\tau_{TQ}\circ a_{(q_{d},v_{d})}=(q_{d},v_{d})$ where
$\tau_{TQ}\colon TTQ\to TQ$ is the canonical projection.

Hamilton's principle seeks discrete curves
$\{(q_{k},v_{k})\}_{k=0}^{N}$ that satisfy
\[\delta\sum_{k=0}^{N-1}L_{d}(q_{k},v_{k},q_{k+1}, v_{k+1})=0\] for
all variations $\{(\delta q_{k},\delta v_{k})\}_{k=0}^{N}$ vanishing
at the endpoints. This is equivalent to the \textit{discrete
Euler--Lagrange equations}
\begin{subequations}\label{euler-lagrange equations}
\begin{align}
D_3L_d(q_{k-1},v_{k-1},q_k,v_k)+D_1L_d(q_k,v_k,q_{k+1},v_{k+1})=0,\\
D_4L_d(q_{k-1},v_{k-1},q_k,v_k)+D_2L_d(q_k,v_k,q_{k+1},v_{k+1})=0,
\end{align}
\end{subequations}
for $1\leq k\leq N-1$.

Given a solution $\{q_{k}^{*},v_{k}^{*}\}_{k\in\mathbb{Z}}$ of
equations \eqref{euler-lagrange equations} and assuming that the
$2n\times 2n$ matrix \[\left(
                \begin{array}{cc}
                  D_{13}L_d(q_{k},v_{k},q_{k+1},v_{k+1}) & D_{14}L_d(q_{k},v_{k},q_{k+1},v_{k+1}) \\
                  D_{23}L_d(q_{k},v_{k},q_{k+1},v_{k+1}) & D_{24}L_d(q_{k},v_{k},q_{k+1},v_{k+1}) \\
                \end{array}
              \right)\]
 is nonsingular, it is possible to define the (local) \textit{discrete flow} $F_{L_{d}}\colon \mathcal{U}_{k}\subset TQ\times TQ\to TQ\times TQ$
mapping $(q_{k-1},v_{k-1},q_{k},v_{k})$ to
$(q_{k},v_{k},q_{k+1},v_{k+1})$ from \eqref{euler-lagrange
equations} where $\mathcal{U}_{k}$ is a neighborhood of the point
$(q_{k-1}^{*},v_{k-1}^{*},q_{k}^{*},v_{k}^{*})$.
The simplecticity and momentum preservation of the discrete flow is derived in \cite{mawest}.

\begin{example}\label{example_cubic_splines}\textbf{Cubic splines}
Let $Q=\R^{n}$ and $L\colon T^{(2)}Q\equiv(\R^{n})^{3}\to\R$ be the
second-order Lagrangian given by
$L(q,\dot{q},\ddot{q})=\frac{1}{2}\ddot{q}^{2}$.

It is well known that the solutions to the corresponding
Euler--Lagrange equations $q^{(4)}=0$ are the so-called cubic splines
$q(t)=at^{3}+bt^{2}+ct+d$, for $a,b,c,d\in\R^{n}$. We define
$L_{d}\colon (\R^{n}\times\R^{n})\times(\R^{n}\times\R^{n})\to\R$ as
follows. Write \begin{subequations}\label{discrete Taylor}
\begin{align}
q(0)=q(h)-h\dot{q}(h)+\frac{h^{2}}{2}\ddot{q}(h)+\mathpzc{O}(h^{3}),\\
q(h)=q(0)+h\dot{q}(0)+\frac{h^2}{2}\ddot{q}(0)+\mathpzc{O}(h^{3}).
\end{align}
\end{subequations} Given sufficiently close $(q_0,v_0),(q_1,v_1)\in TQ$ we can use
equations \eqref{discrete Taylor} to obtain approximations of the
acceleration of the exact solution joining these boundary
conditions at time $h$, which we call
\[a_0=\frac{2}{h^{2}}(q_1-q_0-hv_0)\hbox{ and }
a_1=\frac{2}{h^{2}}(q_0-q_1+hv_1).\] Then we define

\[L_{d}(q_0,v_0,q_1,v_1)=\frac{h}{2}\left(L(q_0,v_0,a_0)+L(q_1,v_1,a_1)\right)=\frac{(hv_1+q_0-q_1)^{2}}{h^3}+\frac{(-hv_0-q_0+q_1)^2}{h^{3}}.\]

Solving the discrete second-order Euler--Lagrange equations for this
discrete Lagrangian, the evolution of the discrete trajectory is
\begin{subequations}\label{metodosplineTaylor}
\begin{align}
&q_{k+1}=q_{k-1}+2hv_k,\\
&v_{k+1}=v_{k-1}+4\left(v_{k}-\frac{q_{k}-q_{k-1}}{h}\right).
\end{align}
\end{subequations}
In the following section we will continue this example and show some simulations.
\end{example}

\subsection{Discrete Legendre transforms}
We define the \textit{discrete Legendre transforms}
$\mathbb{F}^{+}L_{d},\mathbb{F}^{-}L_{d}\colon TQ\times TQ\to T^{*}TQ$
which maps the space $TQ\times TQ$ into $T^{*}TQ$. These are given
by
\begin{align*}
\mathbb{F}^{+}L_{d}(q_{0},v_0,q_1,v_1)&=\left(q_0,v_0,-D_{1}L_{d}(q_0,v_0,q_1,v_1),-D_{2}L_{d}(q_0,v_0,q_1,v_1)\right),\\
\mathbb{F}^{-}L_{d}(q_{0},v_0,q_1,v_1)&=\left(q_1,v_1,D_{3}L_{d}(q_0,v_0,q_1,v_1),D_{4}L_{d}(q_0,v_0,q_1,v_1)\right).
\end{align*}
If both discrete fibre derivatives are locally diffeomorphisms for
nearby $(q_0,v_0)$ and $(q_1,v_1)$, then we say that $L_d$ is
\textit{regular}. 

Using the discrete Legendre transforms the discrete Euler--Lagrange
equations \eqref{euler-lagrange equations} can be rewritten as
\[\mathbb{F}^{-}L_{d}(q_k,v_k,q_{k+1},v_{k+1})=\mathbb{F}^{+}L_{d}(q_{k-1},v_{k-1},q_k,v_k).\]

It will be useful to note that
\begin{align*}
\mathbb{F}^{-}L_{d}\circ F_{L_d}(q_0,v_0,q_1,v_1)&=\mathbb{F}^{-}L_{d}(q_1,v_1,q_2,v_2)\\
&=\left(q_1,v_1,-D_{1}L_{d}(q_1,v_1,q_2,v_2),-D_{2}L_{d}(q_1,v_1,q_2,v_2)\right)\\
&=\left(q_1,v_1,D_{3}L_{d}(q_0,v_0,q_1,v_1),D_{4}L_{d}(q_0,v_0,q_1,v_1)\right)\\
&=\mathbb{F}^{+}L_{d}(q_0,v_0,q_1,v_1),
\end{align*} that is, \begin{equation}\label{relationF}\mathbb{F}^{+}L_{d}=\mathbb{F}^{-}L_{d}\circ F_{L_{d}}.\end{equation}

\begin{remark} It is easy to extend this framework to
higher-order mechanical systems. Let $L\colon T^{(\ell)}Q\to\R$ be a regular higher-order Lagrangian.
Given a small enough $h>0$, the \textit{exact discrete Lagrangian} $L_d^{e}\colon T^{(\ell-1)}Q\times
T^{(\ell-1)}Q\to\R$ is defined by
\[L_d^{e}(q_0^{(0)},q_0^{(1)},\ldots,q_0^{(\ell-1)};q_1^{(0)},q_1^{(1)},\ldots,q_1^{(\ell-1)})=\int_{0}^{h}L(q(t),\dot{q}(t),\ldots,q^{(\ell)}(t))dt,\]
where $q(t)\colon I\subset\R\to Q$ is the unique solution of the
Euler--Lagrange equations for the higher-order Lagrangian $L$,
\[ \sum_{j=0}^{\ell}(-1)^j
\frac{d^j}{dt^j}\frac{\partial L}{\partial q^{(j)}}=0,
\]
satisfying the boundary conditions
$q(0)=q_0^{(0)},\dot{q}(0)=q_0^{(1)},\ldots,q^{(\ell-1)}(0)=q_0^{(\ell-1)},q(h)=q_1^{(0)},\dot{q}(h)=q_1^{(1)},\ldots,q^{(\ell-1)}(h)=q_1^{(\ell-1)}$.

The exact discrete Lagrangian is actually defined on a neighborhood of the diagonal of $T^{(\ell-1)}Q\times T^{(\ell-1)}Q$. We take 
$L_{d}\colon T^{(\ell-1)}Q\times T^{(\ell-1)}Q\to\R$ to be an approximation of
$L_{d}^{e}$ in order to construct
variational integrators for higher-order mechanical systems.

Given a discrete path $\{(q_k^{(0)},\dots,q_k^{(\ell-1)})\in T^{(\ell-1)}Q\}|_{k=0}^N$, the corresponding discrete action is defined as
\[\mathcal{A}_d:=\sum_{k=0}^{N-1}L_d(q_k^{(0)},\dots,q_k^{(\ell-1)};q_{k+1}^{(0)},\dots,q_{k+1}^{(\ell-1)}).\]

Hamilton's principle seeks discrete paths that satisfy $\delta \mathcal{A}_d=0$  for all variations $\{(\delta q_k^{(0)},\dots,\allowbreak \delta q_k^{(\ell-1)})|_{k=0}^N\}$ vanishing at the endpoints $k=0,N$. This is equivalent to the \textit{discrete higher-order
Euler--Lagrange equations for $L_{d}$:}
\begin{equation*}%\label{ho-euler-lagrange equations}
D_{i+\ell}L_d(q_{k-1}^{(0)},\dots,q_{k-1}^{(\ell-1)};q_{k}^{(0)},\dots,q_{k}^{(\ell-1)})+D_{i}L_d(q_{k}^{(0)},\dots,q_{k}^{(\ell-1)};q_{k+1}^{(0)},\dots,q_{k+1}^{(\ell-1)})=0
\end{equation*} for $i=1,\dots,\ell$ and $k=1,\dots,N-1$. 
\end{remark}

\section{Relationship between discrete and continuous variational
systems}\label{section3}
Let $L\colon T^{(2)}Q\to\R$ be a regular Lagrangian and, for small enough $h>0$, consider the exact
discrete Lagrangian defined before, that is, a function $L_d^{e}\colon TQ\times TQ\to\R$ given by
\[L_d^{e}(q_0,\dot{q}_0,q_1,\dot{q}_1)=\int_{0}^{h}L(q(t),\dot{q}(t),\ddot{q}(t))dt,\]
where $q\colon [0,h]\to Q$ is the unique solution of the
Euler--Lagrange equations for the second-order Lagrangian $L$,
\begin{equation*}%\label{SOEL}
\frac{d^2}{dt^2}\frac{\partial L}{\partial\ddot{q}}-\frac{d}{dt}\frac{\partial L}{\partial\dot{q}}+\frac{\partial L}{\partial q}=0
\end{equation*}
satisfying the boundary conditions
$q(0)=q_0,q(h)=q_1,\dot{q}(0)=\dot{q}_0$ and $\dot{q}(h)=\dot{q}_1$.

The Legendre transformation associated to $L$  is defined to be the
map $\mathbb{F}L\colon T^{(3)}Q\to T^{*}TQ$ given by (see \cite{LR1})
\begin{equation*}%\label{FL}
\mathbb{F}L(q,\dot{q},\ddot{q},{q}^{(3)})=\left(q,\dot{q},\frac{\partial L}{\partial\dot{q}}-\frac{d}{dt}\frac{\partial{L}}{\partial\ddot{q}},\frac{\partial L}{\partial\ddot{q}}\right).
\end{equation*}

We will see that there is a special relationship between the
Legendre transform of a regular Lagrangian and the discrete Legendre
transforms of the corresponding exact discrete Lagrangian $L_d^{e}$.

\begin{theorem}\label{Theo1}
Let $L\colon T^{(2)}Q\to\R$ be a regular Lagrangian and $L_d^{e}\colon TQ\times
TQ\to\R$, the corresponding exact discrete Lagrangian. Then $L$ and $L_d^{e}$
have Legendre transformations related by
\begin{align*}
\mathbb{F}^{-}L_d^{e}(q(0),\dot{q}(0),q(h),\dot{q}(h))&=\F L(q(0),\dot{q}(0),\ddot{q}(0), {q}^{(3)}(0))\\
\mathbb{F}^{+}L_d^{e}(q(0),\dot{q}(0),q(h),\dot{q}(h))&=\F L(q(h),\dot{q}(h),\ddot{q}(h), {q}^{(3)}(h)),
\end{align*}
where $q (t)$ is a solution of the second-order Euler--Lagrange equations. 
\end{theorem}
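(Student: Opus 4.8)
The plan is to prove both identities at once by computing the four partial derivatives $D_1L_d^e,\dots,D_4L_d^e$ directly from the defining formula $L_d^e(q_0,\dot q_0,q_1,\dot q_1)=\int_0^h L(q,\dot q,\ddot q)\,dt$ and then recognizing the resulting boundary expressions as the third and fourth slots of the continuous Legendre transform $\F L$ read off at $t=0$ and $t=h$. Throughout, write $q(t)=q(t;q_0,\dot q_0,q_1,\dot q_1)$ for the unique solution supplied by Section \ref{existence}, and abbreviate the two momenta appearing in $\F L$ by $p:=\frac{\partial L}{\partial \dot q}-\frac{d}{dt}\frac{\partial L}{\partial \ddot q}$ and $\pi:=\frac{\partial L}{\partial \ddot q}$, so that $\F L=(q,\dot q,p,\pi)$.

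First I would record the analytic input from Section \ref{existence}: the solution $q(t)$ is not merely $C^2$ but $C^4$ in $t$, and, more importantly, it depends differentiably on the boundary data, since it is built from the $C^k$ map $\psi$ of the implicit function theorem followed by a smooth reparametrization. This legitimizes differentiating under the integral sign. For a single boundary variable, say the $a$-th component of $q_1$, setting $\delta q(t):=\partial q(t)/\partial q_1^a$ gives a well-defined $C^4$ curve and
\begin{equation*}
\frac{\partial L_d^e}{\partial q_1^a}=\int_0^h\left(\frac{\partial L}{\partial q}\,\delta q+\frac{\partial L}{\partial \dot q}\,\delta\dot q+\frac{\partial L}{\partial \ddot q}\,\delta\ddot q\right)dt.
\end{equation*}

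The decisive step is the first-variation formula for second-order Lagrangians. Integrating by parts once in the $\delta\dot q$ term and twice in the $\delta\ddot q$ term collects an interior integrand multiplying $\delta q$ that is exactly the left-hand side of the Euler--Lagrange equation $\frac{d^2}{dt^2}\frac{\partial L}{\partial\ddot q}-\frac{d}{dt}\frac{\partial L}{\partial\dot q}+\frac{\partial L}{\partial q}$, which vanishes because $q$ is a solution. What survives is the boundary term $\left[p\,\delta q+\pi\,\delta\dot q\right]_0^h$. Differentiating with respect to one boundary variable at a time activates exactly one of the four boundary slots: for $\partial/\partial q_1^a$ one has $\delta q(0)=0$, $\delta\dot q(0)=0$, $\delta q(h)=e_a$, $\delta\dot q(h)=0$, so only the $t=h$ position term remains and $D_3L_d^e=p(h)$; the analogous choices give $D_4L_d^e=\pi(h)$, $D_1L_d^e=-p(0)$, and $D_2L_d^e=-\pi(0)$.

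Finally I would substitute these into the discrete Legendre transforms and use $q_0=q(0)$, $\dot q_0=\dot q(0)$, $q_1=q(h)$, $\dot q_1=\dot q(h)$. The quadruple built from the $t=0$ data, namely $(q(0),\dot q(0),-D_1L_d^e,-D_2L_d^e)=(q(0),\dot q(0),p(0),\pi(0))$, is precisely $\F L(q(0),\dot q(0),\ddot q(0),q^{(3)}(0))$ and equals $\mathbb{F}^-L_d^e$; the quadruple built from the $t=h$ data, $(q(h),\dot q(h),D_3L_d^e,D_4L_d^e)=(q(h),\dot q(h),p(h),\pi(h))$, is precisely $\F L(q(h),\dot q(h),\ddot q(h),q^{(3)}(h))$ and equals $\mathbb{F}^+L_d^e$; these are the two asserted identities. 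I expect the one genuine obstacle to be the analytic justification above rather than the algebra: one must be sure that $\delta q=\partial q/\partial q_1^a$ (and its three analogues) is a legitimate differentiable variation of the solution curve, so that differentiation under the integral is valid. This is exactly what the smooth dependence of $\psi$ on its arguments, established in Section \ref{existence}, is designed to provide.
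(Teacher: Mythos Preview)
Your proof is correct and follows essentially the same approach as the paper: differentiate $L_d^e$ under the integral with respect to each boundary datum, integrate by parts (once for $\delta\dot q$, twice for $\delta\ddot q$), use the Euler--Lagrange equations to kill the interior term, and read off the boundary contributions. Your explicit discussion of the analytic justification via the smooth dependence established in Section~\ref{existence} is a welcome addition that the paper leaves implicit, but the core argument is identical.
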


\begin{proof}
We begin by computing the derivatives of $L_d^{e}$.
\begin{align*}\frac{\partial L_d^{e}}{\partial
q_0}&=\int_{0}^{h}\left(\frac{\partial L}{\partial q}\frac{\partial
q}{\partial q_0}+\frac{\partial
L}{\partial\dot{q}}\frac{\partial\dot{q}}{\partial
q_0}+\frac{\partial
L}{\partial\ddot{q}}\frac{\partial\ddot{q}}{\partial
q_0}\right)dt\\
&=\int_0^{h}\left(\frac{\partial L}{\partial q}\frac{\partial
q}{\partial q_0}+\frac{\partial
L}{\partial\dot{q}}\frac{\partial\dot{q}}{\partial
q_0}-\left(\frac{d}{dt}\frac{\partial
L}{\partial\ddot{q}}\right)\frac{\partial\dot{q}}{\partial
q_0}\right)dt+\left(\frac{\partial L}{\partial \ddot{q}}\frac{\partial\dot{q}}{\partial q_0}\right)\Big{|}_{0}^{h}\\
&=\int_0^{h}\left( \frac{\partial L}{\partial q}\frac{\partial
q}{\partial q_0}+\left(\frac{\partial
L}{\partial\dot{q}}-\frac{d}{dt}\frac{\partial
L}{\partial\ddot{q}}\right)\frac{\partial\dot{q}}{\partial
q_0}\right)dt,
\end{align*}
where we have used integration by parts and the fact that \[\frac{\partial\dot{q}}{\partial q_0}(0)=0\hbox{ and }
\frac{\partial\dot{q}}{\partial q_0}(h)=0.\]
Therefore,
\[\frac{\partial L_d^{e}}{\partial
q_0}=\left(\left(\frac{\partial
L}{\partial\dot{q}}-\frac{d}{dt}\frac{\partial
L}{\partial\ddot{q}}\right)\frac{\partial{q}}{\partial
q_0}\right)\Big{|}_{0}^{h}+\int_{0}^{h}\left(\frac{\partial L}{\partial
q}-\frac{d}{dt}\frac{\partial
L}{\partial\dot{q}}+\frac{d^2}{dt^2}\frac{\partial
L}{\partial\ddot{q}}\right)\frac{\partial q}{\partial
q_0}\,dt.\]
Since $q(t)$ is a solution of the Euler--Lagrange equations
for $L\colon T^{(2)}Q\to\R$, the last term  is zero.  Therefore,
\begin{equation}\label{dLq0}\frac{\partial L_d^{e}}{\partial
q_0}=\left(\left(\frac{\partial L}{\partial\dot{q}}-\frac{d}{dt}\frac{\partial
L}{\partial\ddot{q}}\right)\frac{\partial q}{\partial q_0}\right)\Big{|}_{0}^{h}=
\left(-\frac{\partial
L}{\partial\dot{q}}+\frac{d}{dt}\frac{\partial
L}{\partial\ddot{q}}\right)(q(0),\dot{q}(0),\ddot{q}(0), q^{(3)}(0)),
\end{equation} because \[\frac{\partial q}{\partial q_0}(0)=\operatorname{Id} \hbox{ and }
\frac{\partial q}{\partial q_0}(h)=0.\]

On the other hand,
\begin{align*}\frac{\partial L_d^{e}}{\partial
\dot{q}_0}&=\int_{0}^{h}\left(\frac{\partial L}{\partial q}\frac{\partial
q}{\partial \dot{q}_0}+\frac{\partial
L}{\partial\dot{q}}\frac{\partial\dot{q}}{\partial
\dot{q}_0}+\frac{\partial
L}{\partial\ddot{q}}\frac{\partial\ddot{q}}{\partial
\dot{q}_0}\right)dt=\\
&\int_0^{h}\left(\frac{\partial L}{\partial q}\frac{\partial
q}{\partial \dot{q}_0}+\frac{\partial
L}{\partial\dot{q}}\frac{\partial\dot{q}}{\partial
\dot{q}_0}-\left(\frac{d}{dt}\frac{\partial
L}{\partial\ddot{q}}\right)\frac{\partial\dot{q}}{\partial
\dot{q}_0}\right)dt+\left(\frac{\partial L}{\partial \ddot{q}}\frac{\partial\dot{q}}{\partial \dot{q}_0}\right)\Big{|}_{0}^{h}=\\
&\int_0^{h}\left( \frac{\partial L}{\partial q}\frac{\partial
q}{\partial \dot{q}_0}+\left(\frac{\partial
L}{\partial\dot{q}}-\frac{d}{dt}\frac{\partial
L}{\partial\ddot{q}}\right)\frac{\partial\dot{q}}{\partial
\dot{q}_0}\right)dt+\left(\frac{\partial L}{\partial \ddot{q}}\frac{\partial\dot{q}}{\partial \dot{q}_0}\right)\Big{|}_{0}^{h}=\\
&\int_{0}^{h}\left(\frac{\partial
L}{\partial q}-\frac{d}{dt}\frac{\partial
L}{\partial\dot{q}}+\frac{d^2}{dt^2}\frac{\partial L}{\partial\ddot{q}}\right)\frac{\partial q}{\partial
\dot{q}_0}dt+\frac{\partial L}{\partial \ddot{q}}\frac{\partial\dot{q}}{\partial \dot{q}_0}\Big{|}_{0}^{h}+\left(\frac{\partial L}{\partial\dot{q}}-\frac{d}{dt}\frac{\partial
L}{\partial\ddot{q}}\right)\frac{\partial q}{\partial \dot{q}_0}\Big{|}_{0}^{h}.
\end{align*}
Since $q(t)$ is a solution of the Euler--Lagrange equations, the first term is zero, and using that
 \[\frac{\partial\dot{q}}{\partial \dot{q}_0}(0)=\operatorname{Id},\quad \frac{\partial\dot{q}}{\partial
 \dot{q}_0}(h)=0,\quad \frac{\partial q}{\partial \dot{q}_0}(0)=0, \hbox{ and }
\frac{\partial q}{\partial \dot{q}_0}(h)=0,\]
we have \[\frac{\partial L_d^{e}}{\partial
\dot{q}_0}=-\frac{\partial
L}{\partial\ddot{q}}(q(0),\dot{q}(0),\ddot{q}(0)).\]
Therefore
\begin{align*}
  \mathbb{F}^{-}L_d^{e}(q(0),\dot{q}(0),q(h),\dot{q}(h))&=\Big(q(0), \dot q(0),-\frac{\partial L_d^{e}}{\partial
{q}_0}(q(0),\dot{q}(0),q(h),\dot{q}(h)),-\frac{\partial L_d^{e}}{\partial
\dot{q}_0}(q(0),\dot{q}(0),q(h),\dot{q}(h))\Big)\\
&=\F L(q(0),\dot{q}(0),\ddot{q}(0),
q^{(3)}(0)).
\end{align*}

With similar arguments, we can also prove that
\[\frac{\partial L_d^{e}}{\partial
q_1}=\left(\frac{\partial
L}{\partial\dot{q}}-\frac{d}{dt}\frac{\partial
L}{\partial\ddot{q}}\right)(q(h),\dot{q}(h),\ddot{q}(h), q^{(3)}(h))\] and
\[\frac{\partial L_d^{e}}{\partial
\dot{q}_1}=\frac{\partial
L}{\partial\ddot{q}}(q(h),\dot{q}(h),\ddot{q}(h)),\] and in
consequence,
\[\mathbb{F}^{+}L_d^{e}(q(0),\dot{q}(0),q(h),\dot{q}(h))=\F
L(q(h),\dot{q}(h),\ddot{q}(h), q^{(3)}(h)).\qedhere\]
\end{proof}

In what follows we will study the relation between the regularity of the  continuous Lagrangian, given by the hessian matrix
\[
{\mathcal W}=\left(\frac{\partial^2 L}{\partial \ddot{q}\; \partial \ddot{q}}\right)
\]
and the regularity condition corresponding to the exact discrete Lagrangian $L_d^e\colon  TQ\times TQ\to {\mathbb R}$
\[
{\mathcal W}_d=\left(
\begin{array}{rr}
D_{13}L_d^e&D_{14} L_d^e\\
D_{23}L_d^e&D_{24}L_d^e
\end{array}
\right).
\]

For the next theorem, we restrict ourselves to Lagrangians that can be written locally as
\begin{equation}\label{special_Lagrangian}
  L(q, \dot q, \ddot q)= \frac{1}{2}g_{ij}(q) \ddot q^i \ddot q^j+ \ddot q^if_i(q, \dot q)+ V(q, \dot q),
\end{equation}
where $(g_{ij}(q))$ is a regular matrix for all $q$. It is also possible to write this condition intrinsically by using a metric, a connection, a one-form and a function. This covers the kind of Lagrangians that appear in interpolation problems \cite{MR2864799} and in optimal control problems with cost functionals of the form $\frac{1}{2}\int_0^T\|u\|^2dt$, where $u$ represents the control force applied to a system having a (first-order) Lagrangian of mechanical type (see section \ref{section5}).

\begin{theorem}
Let $L\colon T^{(2)}Q \to \mathbb{R}$ be a regular Lagrangian of the type \eqref{special_Lagrangian}. For small enough $h>0$, the corresponding exact discrete Lagrangian $L_d^e\colon TQ\times TQ\to {\mathbb R}$ is also regular.
\end{theorem}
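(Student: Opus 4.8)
The plan is to deduce the regularity of $L_d^e$ from that of the continuous system, using the correspondence between Legendre transforms proved in Theorem \ref{Theo1}. Regularity of $L_d^e$ means that $\mathcal{W}_d$ is nonsingular, and this is equivalent to either discrete Legendre transform being a local diffeomorphism: differentiating $\mathbb{F}^{+}L_d^e$ with respect to its free pair of arguments $(q_1,v_1)$ gives $-\mathcal{W}_d$, and differentiating $\mathbb{F}^{-}L_d^e$ with respect to $(q_0,v_0)$ gives $\mathcal{W}_d^{\mathsf T}$. By Theorem \ref{Theo1} these transforms factor as $\mathbb{F}^{-}L_d^e=\mathbb{F}L\circ\Phi^{-}$ and $\mathbb{F}^{+}L_d^e=\mathbb{F}L\circ\Phi^{+}$, where $\Phi^{-}(q_0,v_0,q_1,v_1)=(q_0,v_0,\ddot q(0),q^{(3)}(0))$ and $\Phi^{+}(q_0,v_0,q_1,v_1)=(q_1,v_1,\ddot q(h),q^{(3)}(h))$ extract the $3$-jet of the exact solution $q(t)$ at the two endpoints. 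Hence it is enough to prove that $\mathbb{F}L$ and one of the jet maps $\Phi^{\pm}$ are local diffeomorphisms.

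First I would verify that $\mathbb{F}L\colon T^{(3)}Q\to T^*TQ$ is a local diffeomorphism. Since $\mathbb{F}L$ leaves $(q,\dot q)$ unchanged, its Jacobian is block lower triangular, and invertibility is governed by the derivative of $\left(\partial L/\partial\dot q-\tfrac{d}{dt}\,\partial L/\partial\ddot q,\ \partial L/\partial\ddot q\right)$ with respect to $(\ddot q,q^{(3)})$. Because $\tfrac{d}{dt}\,\partial L/\partial\ddot q$ contributes the term $\partial^2L/\partial\ddot q^2\,q^{(3)}$, a short computation gives for this block
\[
\begin{pmatrix} * & -\mathcal{W}\\ \mathcal{W} & 0\end{pmatrix},
\]
whose determinant equals $(\det\mathcal{W})^2$ up to sign. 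For a Lagrangian of the form \eqref{special_Lagrangian} one has $\mathcal{W}=(g_{ij})$, which is regular by hypothesis, so $\mathbb{F}L$ is a local diffeomorphism.

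It then remains to show that, say, $\Phi^{-}$ is a local diffeomorphism for small $h$. As $\Phi^{-}$ fixes $(q_0,v_0)$, this reduces to the invertibility of the derivative of $(q_1,v_1)\mapsto(\ddot q(0),q^{(3)}(0))$, equivalently the nondegeneracy of the shooting map $\sigma_h\colon(\ddot q(0),q^{(3)}(0))\mapsto(q(h),\dot q(h))$ obtained by integrating the (explicit, fourth-order) Euler--Lagrange equation from the initial $3$-jet with $(q(0),\dot q(0))=(q_0,v_0)$ held fixed. The main obstacle is precisely that $\sigma_h$ degenerates at $h=0$, where every initial jet is sent to $(q_0,v_0)$; this is the same degeneracy that forced the regularization of Section \ref{existence}. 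I would overcome it by a leading-order expansion in $h$, legitimate because Section \ref{existence} provides smooth dependence of the solution on the data and on $h$ (through the map $\psi$).

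Concretely, Taylor expansion gives $q(h)=q_0+hv_0+\tfrac{h^2}{2}\ddot q(0)+\tfrac{h^3}{6}q^{(3)}(0)+O(h^4)$ and $\dot q(h)=v_0+h\ddot q(0)+\tfrac{h^2}{2}q^{(3)}(0)+O(h^3)$, so that the Jacobian of $\sigma_h$ with respect to $(\ddot q(0),q^{(3)}(0))$ is
\[
\begin{pmatrix} \tfrac{h^2}{2}I+O(h^4) & \tfrac{h^3}{6}I+O(h^4)\\ hI+O(h^3) & \tfrac{h^2}{2}I+O(h^3)\end{pmatrix}.
\]
Multiplying on the left by $\mathrm{diag}(h^{-2}I,h^{-1}I)$ and on the right by $\mathrm{diag}(I,h^{-1}I)$, this matrix tends as $h\to 0$ to the constant block matrix with blocks $\tfrac12 I,\ \tfrac16 I,\ I,\ \tfrac12 I$, of determinant $(1/12)^n\neq 0$. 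Hence $\sigma_h$ is nondegenerate for all sufficiently small $h>0$, so $\Phi^{-}$ and therefore $\mathbb{F}^{-}L_d^e=\mathbb{F}L\circ\Phi^{-}$ are local diffeomorphisms, $\det\mathcal{W}_d\neq 0$, and $L_d^e$ is regular. The quadratic-in-$\ddot q$ hypothesis \eqref{special_Lagrangian} is what makes these leading coefficients transparent and reduces the whole regularity condition to that of the metric $g$.
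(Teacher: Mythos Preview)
Your argument is correct and takes a genuinely different route from the paper's. The paper computes each block of $\mathcal{W}_d$ directly: starting from the formula \eqref{dLq0} for $D_1L_d^e$, it differentiates again with respect to $q_1,\dot q_1$, which forces it to invert the Taylor expansions and evaluate expressions like $\partial\ddot q(0)/\partial q_1\sim 6/h^2$ and $\partial q^{(3)}(0)/\partial q_1\sim -12/h^3$. These blow-up factors are multiplied by quantities such as $\partial F_i/\partial\ddot q^k$, which in turn contain third derivatives of $L$ times $\ddot q(0)$ or $q^{(3)}(0)$; the special form \eqref{special_Lagrangian} is invoked precisely to make those third derivatives vanish so the products stay bounded. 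Your factorization $\mathbb{F}^{-}L_d^e=\mathbb{F}L\circ\Phi^{-}$ bypasses this difficulty entirely: you analyze the \emph{forward} shooting map $\sigma_h$ at a given initial $3$-jet, where the leading blocks $\tfrac{h^2}{2}I,\,\tfrac{h^3}{6}I,\,hI,\,\tfrac{h^2}{2}I$ are universal and the remainders are controlled by the smooth vector field $\Psi$, not by inverse powers of $h$. Combined with the block computation showing $D(\mathbb{F}L)$ has determinant $\pm(\det\mathcal{W})^2$, this yields nondegeneracy of $\mathcal{W}_d$ without ever touching \eqref{special_Lagrangian}.

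Two small remarks. First, your closing sentence is misleading: your proof does \emph{not} use the quadratic-in-$\ddot q$ hypothesis anywhere, and in fact establishes the result for any regular second-order Lagrangian; this is a strengthening of the theorem as stated. Second, be aware that the paper's labeling of $\mathbb{F}^{+}$ and $\mathbb{F}^{-}$ is inconsistent between Section~3.1 and Theorem~\ref{Theo1}; your identification of the ``free pair of arguments'' should follow the convention of Theorem~\ref{Theo1} (so that $\mathbb{F}^{-}L_d^e$ has base $(q_0,v_0)$ and free variables $(q_1,v_1)$), since that is the factorization you invoke.
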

\begin{proof} We will work locally. Given $q_0$, $ \dot q_0$, $q_1$, $ \dot q_1$, consider the curve $q(t)$ that solves the Euler--Lagrange equations with those boundary values, as in the definition of $L_d^e$. Using the Taylor expansions for $q(t)$ and $ \dot q(t)$, we can write
\begin{align*}
q(h)&=q(0)+h\dot{q}(0)+\frac{h^2}{2}\ddot{q}(0)+\frac{h^3}{6}q^{(3)}(0)+{\mathpzc O}\left(h^4\right),\\
\dot{q}(h)&=\dot{q}(0)+h\ddot{q}(0)+\frac{h^2}{2}q^{(3)}(0)+{\mathpzc O}\left(h^3\right),
\end{align*}
for $h\to 0$. 
By differentiating these expressions with respect to the parameters $q_0$ and $\dot q_0$, we get two systems of equations from which we find
\begin{alignat*}{2}
\frac{\partial \ddot{q}}{\partial {q}_0}(h)&=\frac{6}{h^2}\operatorname{Id}+{\mathpzc O}\left(h^2\right) ,\quad&
\frac{\partial q^{(3)}}{\partial {q}_0}(h)&=\frac{12}{h^3}\operatorname{Id}+{\mathpzc O}\left(h\right),\\
\frac{\partial \ddot{q}}{\partial \dot{q}_0}(h)&=\frac{2}{h}\operatorname{Id}+{\mathpzc O}\left(h^2\right) ,\quad&
\frac{\partial q^{(3)}}{\partial \dot{q}_0}(h)&=\frac{6}{h^2}\operatorname{Id}+{\mathpzc O}\left(h\right).
\end{alignat*}
Analogously,
\begin{alignat*}{2}
\frac{\partial \ddot{q}}{\partial {q}_1}(0)&=\frac{6}{h^2}\operatorname{Id}+{\mathpzc O}\left(h^2\right) ,\quad&
\frac{\partial q^{(3)}}{\partial {q}_1}(0)&=-\frac{12}{h^3}\operatorname{Id}+{\mathpzc O}\left(h\right),\\
\frac{\partial \ddot{q}}{\partial \dot{q}_1}(0)&=-\frac{2}{h}\operatorname{Id}+{\mathpzc O}\left(h^2\right) ,\quad&
\frac{\partial q^{(3)}}{\partial \dot{q}_1}(0)&=\frac{6}{h^2}\operatorname{Id}+{\mathpzc O}\left(h\right).
\end{alignat*}

Let us compute $D_{13}L_d^e$. Denote by $F$ the right-hand side of \eqref{dLq0}, so
\begin{align*}
\frac{\partial L_d^{e}}{\partial
q_0^i}(q(0),\dot{q}(0),q(h),\dot{q}(h))&=
\left(-\frac{\partial
L}{\partial\dot{q}^i}+\frac{d}{dt}\frac{\partial
L}{\partial\ddot{q}^i}\right)(q(0),\dot{q}(0),\ddot{q}(0), q^{(3)}(0))\\
&=F_i(q(0),\dot{q}(0),\ddot{q}(0), q^{(3)}(0)).
\end{align*}
Recall that $q(0),\dot{q}(0),\ddot{q}(0), q^{(3)}(0)$ are obtained as the initial conditions for the higher-order Euler--Lagrange equations that correspond to the boundary conditions $q(0),\dot{q}(0),q(h),\dot{q}(h)$. We have
\[
F_i=-\frac{\partial
L}{\partial\dot{q}^i}+
\frac{\partial^2 L}{\partial q^j \partial \ddot{q}^i} \dot q^j
+
\frac{\partial^2 L}{\partial \dot q^j \partial \ddot{q}^i} \ddot q^j
+
\frac{\partial^2 L}{\partial \ddot q^j \partial \ddot{q}^i} q^{(3)j}.
\]
Then
\begin{align*}
\frac{\partial^2 L_d^{e}}{\partial
  {q}_1^j\partial {q}_0^i }&=
\frac{\partial F_i}{\partial q^k} \frac{\partial q^k}{\partial q_1^j}
+
\frac{\partial F_i}{\partial \dot q^k} \frac{\partial \dot q^k}{\partial q_1^j}
+
\frac{\partial F_i}{\partial \ddot q^k}\frac{\partial \ddot q^k}{\partial q_1^j}
+
\frac{\partial F_i}{\partial q^{(3)k}}\frac{\partial q^{(3)k}}{\partial q_1^j}=
\frac{\partial F_i}{\partial \ddot q^k}\frac{\partial \ddot q^k}{\partial q_1^j}
+
\frac{\partial F_i}{\partial q^{(3)k}}\frac{\partial q^{(3)k}}{\partial q_1^j}\\
&=\left(-\frac{\partial^2
L}{\partial \ddot q^k\partial\dot{q}^i}+
\frac{\partial^3 L}{\partial \ddot q^k\partial q^j \partial \ddot{q}^i} \dot q^j
+
\frac{\partial^3 L}{\partial \ddot q^k\partial \dot q^j \partial \ddot{q}^i} \ddot q^j
+\frac{\partial^2 L}{\partial \dot q^k \partial \ddot{q}^i}+
\frac{\partial^3 L}{\partial \ddot q^k\partial \ddot q^j \partial \ddot{q}^i} q^{(3)j}\right)\frac{\partial \ddot q^k}{\partial q_1^j}\\
&\quad+
\frac{\partial^2
L}{\partial \ddot q^k\partial\ddot{q}^i}\frac{\partial q^{(3)k}}{\partial q_1^j}\\
&=\left(-\frac{\partial^2
L}{\partial \ddot q^k\partial\dot{q}^i}
+\frac{\partial^2 L}{\partial \dot q^k \partial \ddot{q}^i}+
\frac{d\mathcal{W}_{ik}}{dt}\right) \left(\frac{6}{h^2}\delta_j^k+\mathpzc{O}(h^2)\right)+\frac{\partial^2
L}{\partial \ddot q^k\partial\ddot{q}^i} \left( -\frac{12}{h^3}\delta_j^k+\mathpzc{O}(h)\right).
\end{align*}
In the expression above, the derivatives are evaluated at the arguments corresponding to time $0$ for each function. It is important to note that the first factor involves $\ddot q(0)$ and $q^{(3)}(0)$, which can blow up for $h \to 0$, even in the simple case of cubic splines. However, for $L$ of the type \eqref{special_Lagrangian} we have
\begin{equation*}
\frac{\partial^2
L}{\partial \ddot q^k\partial\dot{q}^i}=\frac{\partial f_k}{\partial \dot q^i},\qquad
\frac{\partial^2
L}{\partial \dot q^k\partial\ddot{q}^i}=\frac{\partial f_i}{\partial \dot q^k},\qquad
\frac{d\mathcal{W}_{ik}}{dt}= \frac{d}{dt}\frac{\partial^2
L}{\partial \ddot q^k\partial\ddot{q}^i}=\frac{d}{dt}g_{ik}=\frac{\partial g_{ik}}{\partial q^l} \dot q^l.
\end{equation*}
These expressions do not contain $\ddot q$ or $q^{(3)}$, so they are $\mathpzc{O}(1)$ for $h \to 0$. Therefore,
\begin{equation*}
D_{13}L_d^e(q(0), \dot{q}(0), q(h), \dot{q}(h))=\frac{\partial^2 L_d^{e}}{\partial
  {q}_0\partial {q}_1 }(q(0), \dot{q}(0), q(h), \dot{q}(h))=-\frac{12}{h^3}\mathcal{W}+\mathpzc{O} \left(\frac{1}{h^2}\right).
\end{equation*}

The remaining derivatives in $\mathcal{W}_d$ can be computed without using the special form \eqref{special_Lagrangian} of the Lagrangian.
\begin{align*}
D_{14}L_d^e(q(0), \dot{q}(0), q(h), \dot{q}(h))&=\frac{\partial^2 L_d^{e}}{\partial
{q}_0\partial \dot{q}_1 }(q(0), \dot{q}(0), q(h), \dot{q}(h))=\frac{6}{h^2}\mathcal {W}+{\mathpzc O}\left(\frac{1}{h}\right)\\
D_{23}L_d^e(q(0), \dot{q}(0), q(h), \dot{q}(h))&=\frac{\partial^2 L_d^{e}}{\partial
\dot{q}_0\partial {q}_1 }(q(0), \dot{q}(0), q(h), \dot{q}(h))=\frac{6}{h^2}\mathcal {W}+{\mathpzc O}\left(\frac{1}{h}\right)\\
D_{24}L_d^e(q(0), \dot{q}(0), q(h), \dot{q}(h))&=\frac{\partial^2 L_d^{e}}{\partial
\dot{q}_0\partial \dot{q}_1 }(q(0), \dot{q}(0), q(h), \dot{q}(h))=-\frac{2}{h}\mathcal {W}+{\mathpzc O}\left(1\right).
\end{align*}

Seeing $\mathcal{W}_d$ as a block matrix, a well-known result from linear algebra leads us to
\[
\det \mathcal{W}_d= \left(-\frac{12}{h^4}\right)^{\dim Q}\det \mathcal{W}^2+{\mathpzc O} \left( \frac{1}{h^{4\dim Q-1}}\right).
\]
That is, for small enough $h$, if $L$ is regular then $L_{d}^{e}$ is regular.
\end{proof}

In what follows we denote $(TQ\times TQ)_2$ the subset of $(TQ\times
TQ)\times (TQ\times TQ)$ given by \[(TQ\times
TQ)_2:=\{(q_0,\dot{q}_0,q_1,\dot{q}_1,\tilde{q}_1,\dot{\tilde{q}}_1,q_2,\dot{q}_2)\mid
\bar{\pi}_2(q_0,\dot{q}_0,q_1,\dot{q}_1)=\bar{\pi}_1(\tilde{q}_1,\dot{\tilde{q}}_1,q_2,\dot{q}_2)\}.\]

If $L\colon T^{(2)}Q\to\R$ is a regular Lagrangian then the Euler--Lagrange
equations for $L$ gives rise a system of explicit $4$-order
differential equations \[q^{(4)}=\Psi(q,\dot{q},\ddot{q},q^{(3)}).\]
Therefore, for $h$ given, it is possible to derive the following
application (see \cite{higherorderbook})
\[\Psi_{L}^{h}\colon T^{(3)}Q\to T^{(3)}Q\] which maps $(q(0),\dot{q}(0),\ddot{q}(0),q^{(3)}(0))\in T^{(3)}Q$
into $(q(h),\dot{q}(h),\ddot{q}(h),q^{(3)}(h))\in T^{(3)}Q$.
Therefore, from Theorem \ref{Theo1} we deduce the commutativity the diagram in Figure \ref{diagram:LEH}.

\begin{figure}[h!]
\begin{center}
\begin{tikzpicture}[scale=0.8, every node/.style={scale=0.9}]
\path (2.0,4) node(a) {$(q(0),\dot{q}(0),q(h),\dot{q}(h))$};
\path (-1,0) node(c) {$(q(0),\dot{q}(0),-D_1L_d^{e},-D_2L_d^{e})$};
\path (5,0) node(d) {$(q(h),\dot{q}(h),D_3L_d^{e},D_4L_d^{e})$};
\path (-1,-4) node(cc) {$(q(0),\dot{q}(0),\ddot{q}(0),q^{(3)}(0))$};
\path (5,-4) node(dd) {$(q(h),\dot{q}(h),\ddot{q}(h),q^{(3)}(h))$};

\path (0.65,2.3) node[anchor=east] (g) {$\mathbb{F}^{-}L_d^{e}$};
\path (3.45,2.3) node[anchor=west] (h) {$\mathbb{F}^{+}L_d^{e}$};
\path (-1,-2) node[anchor=east] (k) {$\mathbb{F}L$}; \path (5,-2)
node[anchor=east] (l) {$\mathbb{F}L$}; \path (2.0,-4)
node[anchor=north] (k) {$\Psi_L^h$};

\draw[thick,black,|->] (a)  -- (c);
 \draw[thick,black,|->] (a) --
(d);

\draw[thick,black,|->] (cc)  -- (c); \draw[thick,black,|->] (cc)  --
(dd); \draw[thick,black,|->] (dd)  -- (d);

\end{tikzpicture}
\caption[Correspondence between the exact discrete Legendre
transformations and the continuous Hamiltonian flow]{Correspondence
between the discrete Legendre transforms and the continuous
Hamiltonian flow.}\label{diagram:LEH}
\end{center}
\end{figure}
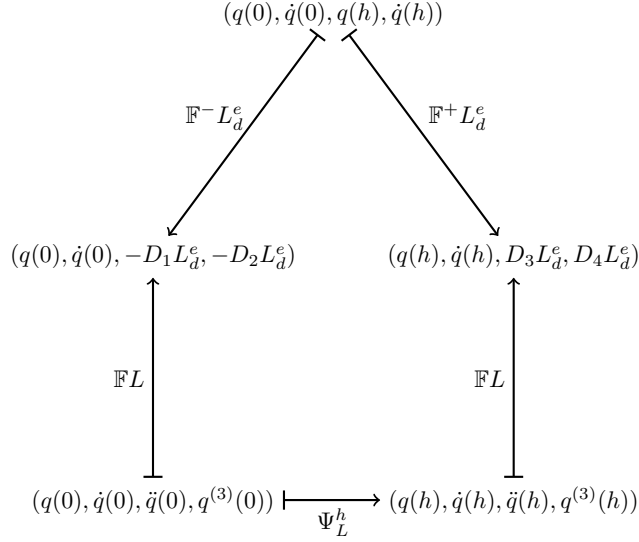

\begin{definition}
The \textit{discrete Hamiltonian flow} is defined by
$\widetilde{F}_{L_d}\colon T^{*}TQ\to T^{*}TQ$ as
\begin{equation}\label{DHF}
\widetilde{F}_{L_d}=\F^{-}L_d\circ F_{L_d}\circ
(\F^{-}L_d)^{-1}.
\end{equation}
Alternatively, it can also be defined as
$\widetilde{F}_{L_d}=\F^{+}L_d\circ F_{L_d}\circ (\F^{+}L_d)^{-1}$.
\end{definition}

\begin{theorem}\label{Theo2}
The diagram in Figure \ref{fig:discretemaps} is commutative.
\begin{figure}[htbp]
\begin{center}
\begin{tikzpicture}[scale=0.8, every node/.style={scale=0.9}]

\path (2.5,4) node(a) {$(q_0,\dot{q}_0,q_1,\dot{q}_1)$}; \path
(7.5,4) node(b) {$(q_1,\dot{q}_1,q_2,\dot{q}_2)$}; \path (0,0)
node(c) {$(q_0,\dot{q}_0,-D_1L_d,-D_2L_d)$}; \path (5,0) node(d)
{$(q_1,\dot{q}_1,D_3L_d,D_4L_d)$}; \path (10,0) node(e)
{$(q_2,\dot{q}_2,-D_1L_d,-D_2L_d)$};

\path (5.0,4.0) node[anchor=south] (f) {$F_{L_d}$}; \path (1.25,2)
node[anchor=east] (g) {$\mathbb{F}^{-}L_d$}; \path (3.9,2)
node[anchor=west] (h) {$\mathbb{F}^{+}L_d$}; \path (6.3,2)
node[anchor=west] (i) {$\mathbb{F}^{-}L_d$}; \path (8.9,2)
node[anchor=west] (j) {$\mathbb{F}^{+}L_d$}; \path (2.5,0)
node[anchor=north] (k) {$\tilde{F}_{L_d}$}; \path (7.5,0)
node[anchor=north] (l) {$\tilde{F}_{L_d}$}; \draw[thick,black,|->]
(a) -- (b); \draw[thick,black,|->] (a)  -- (c);
\draw[thick,black,|->] (a)  -- (d); \draw[thick,black,|->] (b) --
(d); \draw[thick,black,|->] (b)  -- (e); \draw[thick,black,|->] (c)
-- (d); \draw[thick,black,|->] (d)  -- (e);

\end{tikzpicture}
\caption[Correspondence between the discrete Lagrangian and the
 discrete Hamiltonian map]{Correspondence between the discrete
Lagrangian and the discrete Hamiltonian maps.}\label{diagram:LH}
\label{fig:discretemaps}
\end{center}
\end{figure}
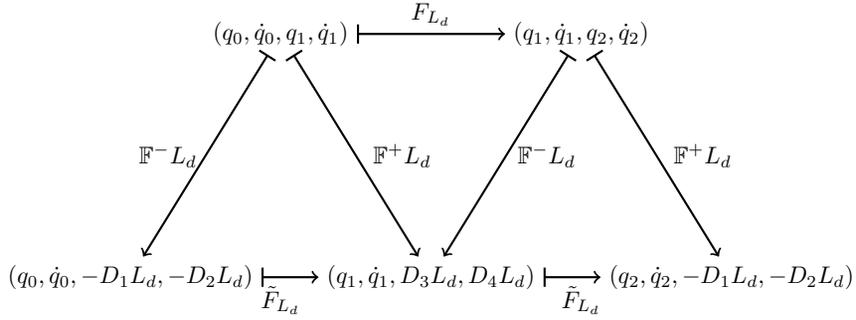

\end{theorem}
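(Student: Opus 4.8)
The plan is to verify commutativity of Figure~\ref{fig:discretemaps} cell by cell, reducing every face to the single relation~\eqref{relationF}, namely $\mathbb{F}^{+}L_d=\mathbb{F}^{-}L_d\circ F_{L_d}$, together with the definition~\eqref{DHF} of the discrete Hamiltonian flow. Throughout I would abbreviate the top-left vertex as $x_0=(q_0,\dot q_0,q_1,\dot q_1)$ and the top-right vertex as $x_1=F_{L_d}(x_0)=(q_1,\dot q_1,q_2,\dot q_2)$, so that the top arrow is the discrete Lagrangian flow determined by the discrete Euler--Lagrange equations~\eqref{euler-lagrange equations}. Following the labels in Figure~\ref{fig:discretemaps}, the picture then decomposes into the quadrilateral spanned by the two top vertices and the central bottom vertex, plus the two triangles hanging below each top vertex. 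Since the paper notes these facts are a transcription of Marsden--West to the velocity phase space, I expect no genuine analytic difficulty; every map in sight is an algebraic combination of partial derivatives of $L_d$.

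First I would treat the quadrilateral, which carries the real content. Its central bottom vertex is reached from $x_0$ by $\mathbb{F}^{+}L_d$ and from $x_1$ by $\mathbb{F}^{-}L_d$, so its consistency is exactly the assertion $\mathbb{F}^{+}L_d(x_0)=\mathbb{F}^{-}L_d(F_{L_d}(x_0))$, which is relation~\eqref{relationF}. Comparing base-point slots, both equal $(q_1,\dot q_1)$ and match automatically; comparing momentum slots, the identity reads $D_3L_d(x_0)=-D_1L_d(x_1)$ and $D_4L_d(x_0)=-D_2L_d(x_1)$, which are precisely the discrete Euler--Lagrange equations~\eqref{euler-lagrange equations} defining $F_{L_d}$. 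Thus this cell commutes exactly because $x_1=F_{L_d}(x_0)$ solves the discrete equations, and this is the step I would single out as the crux.

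Next I would dispatch the two triangles, each asserting $\mathbb{F}^{+}L_d=\widetilde{F}_{L_d}\circ\mathbb{F}^{-}L_d$ (at $x_0$ for the left triangle, at $x_1$ for the right one). Unwinding the definition $\widetilde{F}_{L_d}=\mathbb{F}^{-}L_d\circ F_{L_d}\circ(\mathbb{F}^{-}L_d)^{-1}$ from~\eqref{DHF} and cancelling the inner $(\mathbb{F}^{-}L_d)^{-1}\mathbb{F}^{-}L_d$, one gets $\widetilde{F}_{L_d}(\mathbb{F}^{-}L_d(x_0))=\mathbb{F}^{-}L_d(F_{L_d}(x_0))$, which equals $\mathbb{F}^{+}L_d(x_0)$ again by~\eqref{relationF}; the right triangle is verbatim the same with $x_1$ in place of $x_0$ (using $x_2=F_{L_d}(x_1)$). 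I would also record at this point that the two expressions for $\widetilde{F}_{L_d}$ offered after~\eqref{DHF} agree: substituting $\mathbb{F}^{+}L_d=\mathbb{F}^{-}L_d\circ F_{L_d}$ into $\mathbb{F}^{+}L_d\circ F_{L_d}\circ(\mathbb{F}^{+}L_d)^{-1}$ and cancelling the inner $F_{L_d}$ returns $\mathbb{F}^{-}L_d\circ F_{L_d}\circ(\mathbb{F}^{-}L_d)^{-1}$, so the discrete Hamiltonian flow is independent of the sign choice.

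The only point demanding care is bookkeeping: keeping straight which vertex carries the base point $(q_0,\dot q_0)$ versus $(q_1,\dot q_1)$, and recognising that the sole nontrivial input is~\eqref{relationF}, whose own justification is the discrete Euler--Lagrange equations. Once the central vertex is seen to be consistently defined, every remaining face is a formal consequence of~\eqref{DHF}, and the diagram commutes.
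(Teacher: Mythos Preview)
Your proposal is correct and follows essentially the same route as the paper: the central cell commutes by~\eqref{relationF}, the left triangle then follows by combining~\eqref{relationF} with the definition~\eqref{DHF}, and the right triangle is the index-shifted copy, which in turn yields the equivalence of the two formulas for $\widetilde{F}_{L_d}$. The only cosmetic slip is that what you call the ``quadrilateral'' spanned by the two top vertices and the central bottom vertex is in fact a triangle (three vertices, three edges); with that correction your decomposition matches the paper's exactly.
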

\begin{proof}
The central triangle is \eqref{relationF}.
The
parallelogram on the left-hand side is commutative by \eqref{DHF}, so the triangle on the left is commutative. The triangle on the right is the same as the triangle on the left, with shifted indices. Then parallelogram on the right-hand side is commutative, which gives the equivalence stated in the definition of the discrete Hamiltonian flow.
\end{proof}

\begin{corollary}\label{corollaryrelations} The following definitions of the discrete
Hamiltonian map are equivalent
\begin{align*}
\widetilde{F}_{L_d}&=\F^{+}L_d\circ F_{L_d}\circ(\F^{+}L_d)^{-1},\\
\widetilde{F}_{L_d}&=\F^{-}L_d\circ F_{L_d}\circ(\F^{-}L_d)^{-1},\\
\widetilde{F}_{L_d}&=\F^{+}L_d\circ(\F^{-}L_d)^{-1},
\end{align*} and have the coordinate expression $\widetilde{F}_{L_{d}}\colon (q_0,\dot{q}_{0},p_0,\tilde{p}_0)\mapsto (q_1,\dot{q}_1,p_1,\tilde{p}_1)$,
where we use the notation \begin{align*} p_0&=-D_1L_d(q_0,\dot{q}_0,q_1,\dot{q}_1),\\
\tilde{p}_0&=-D_2L_d(q_0,\dot{q}_0,q_1,\dot{q}_1),\\
p_1&=D_3L_d(q_0,\dot{q}_0,q_1,\dot{q}_1),\\
\tilde{p}_1&=D_4L_d(q_0,\dot{q}_0,q_1,\dot{q}_1).
\end{align*}
\end{corollary}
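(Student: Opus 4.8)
The plan is to deduce all three identities from two facts already in hand: the basic relation \eqref{relationF}, namely $\F^{+}L_d=\F^{-}L_d\circ F_{L_d}$, together with the commutativity of the diagram in Figure \ref{fig:discretemaps} established in Theorem \ref{Theo2}. The equivalence of the first two expressions for $\widetilde{F}_{L_d}$ is exactly the assertion that conjugating $F_{L_d}$ by either discrete Legendre transform yields the same map, which is the content of the two parallelograms in that diagram; so I would simply invoke Theorem \ref{Theo2} here. For the third expression I would substitute \eqref{relationF} into the definition \eqref{DHF}: writing $\widetilde{F}_{L_d}=\F^{-}L_d\circ F_{L_d}\circ(\F^{-}L_d)^{-1}=(\F^{-}L_d\circ F_{L_d})\circ(\F^{-}L_d)^{-1}$ and replacing the parenthesized composite by $\F^{+}L_d$ gives $\widetilde{F}_{L_d}=\F^{+}L_d\circ(\F^{-}L_d)^{-1}$ in one line.

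For the coordinate expression I would trace a point through the first presentation $\widetilde{F}_{L_d}=\F^{+}L_d\circ F_{L_d}\circ(\F^{+}L_d)^{-1}$. Starting from $(q_0,\dot q_0,p_0,\tilde p_0)$ with $p_0=-D_1L_d(q_0,\dot q_0,q_1,\dot q_1)$ and $\tilde p_0=-D_2L_d(q_0,\dot q_0,q_1,\dot q_1)$, the defining formula for $\F^{+}L_d$ shows that $\F^{+}L_d(q_0,\dot q_0,q_1,\dot q_1)=(q_0,\dot q_0,p_0,\tilde p_0)$, so $(\F^{+}L_d)^{-1}$ returns $(q_0,\dot q_0,q_1,\dot q_1)$. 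Applying the discrete flow gives $F_{L_d}(q_0,\dot q_0,q_1,\dot q_1)=(q_1,\dot q_1,q_2,\dot q_2)$, and a final application of $\F^{+}L_d$ produces $(q_1,\dot q_1,-D_1L_d(q_1,\dot q_1,q_2,\dot q_2),-D_2L_d(q_1,\dot q_1,q_2,\dot q_2))$. The discrete Euler--Lagrange equations \eqref{euler-lagrange equations} then identify the last two slots with $D_3L_d(q_0,\dot q_0,q_1,\dot q_1)=p_1$ and $D_4L_d(q_0,\dot q_0,q_1,\dot q_1)=\tilde p_1$, which is exactly the claimed image $(q_1,\dot q_1,p_1,\tilde p_1)$.

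There is no genuine obstacle here, since the result is a formal corollary of Theorem \ref{Theo2}; the only points requiring a word of care are the well-definedness of the inverse maps and the bookkeeping of index shifts. The inverses $(\F^{\pm}L_d)^{-1}$ exist locally precisely because $L_d$ is assumed regular, i.e.\ both discrete fibre derivatives are local diffeomorphisms, so all compositions are defined on suitable neighborhoods. The index bookkeeping amounts to observing that the momenta $p_1,\tilde p_1$ attached to the output are defined by $D_3,D_4$ of $L_d$ evaluated at the \emph{original} pair $(q_0,\dot q_0,q_1,\dot q_1)$, whereas the Legendre transform naturally produces $-D_1,-D_2$ of $L_d$ at the \emph{shifted} pair $(q_1,\dot q_1,q_2,\dot q_2)$; reconciling these two descriptions is exactly what \eqref{euler-lagrange equations} accomplishes, and this is the single substantive step in the computation.
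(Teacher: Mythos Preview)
Your proposal is correct and follows the intended route: the paper states this as an immediate corollary of Theorem~\ref{Theo2} without a separate proof, and your argument---invoking the two parallelograms for the equivalence of the first two formulas, substituting \eqref{relationF} into \eqref{DHF} for the third, and tracing the coordinate expression through the discrete Euler--Lagrange equations---is exactly the unpacking that the word ``Corollary'' signals. One minor remark: the coordinate trace is slightly cleaner if you use the third formula $\widetilde{F}_{L_d}=\F^{+}L_d\circ(\F^{-}L_d)^{-1}$ directly, since then no appeal to \eqref{euler-lagrange equations} is needed; but your version via the first formula is equally valid.
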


 Combining Theorem \eqref{Theo1} with the diagram in Figure
\ref{fig:discretemaps} gives the commutative diagram shown in Figure
\ref{diagram:LEH2} for the exact discrete Lagrangian.

\begin{figure}[h!]
\begin{center}
%\begin{tikzpicture}[fill=blue!20]
\begin{tikzpicture}[scale=0.8, every node/.style={scale=0.9}]
\path (2.0,4) node(a) {$(q_0,\dot{q}_0,q_1,\dot{q}_1)$};
\path (8,4) node(b)  {$(q_1,\dot{q}_1,q_2,\dot{q}_2)$};
\path (-1,0) node(c) {$(q_0,\dot{q}_0,p_0,\tilde{p}_0)$};
\path (5,0) node(d) {$(q_1,\dot{q}_1,p_1,\tilde{p}_1)$};
\path (11,0) node(e) {$(q_2,\dot{q}_2,p_2,\tilde{p}_2)$};
\path (-1,-4) node(cc) {$(q(0),\dot{q}(0),\ddot{q}(0),q^{(3)}(0))$};
\path (5,-4) node(dd) {$(q(h),\dot{q}(h),\ddot{q}(h),q^{(3)}(h))$};
\path (11,-4) node(ee) {$(q(2h),\dot{q}(2h),\ddot{q}(2h),q^{(3)}(2h))$};

\path (5.0,4.0) node[anchor=south] (f) {$F_{L_d^e}$}; \path
(0.65,2.3) node[anchor=east] (g) {$\mathbb{F}^{-}L_d^{e}$}; \path
(3.45,2.3) node[anchor=west] (h) {$\mathbb{F}^{+}L_d^{e}$}; \path
(6.5,1.8) node[anchor=west] (i) {$\mathbb{F}^{-}L_d^{e}$}; \path
(9.7,1.8) node[anchor=west] (j) {$\mathbb{F}^{+}L_d^{e}$}; \path
(2.0,0) node[anchor=north] (k) {$\tilde{F}_{L_d^e} = F_H^h$}; \path
(8,0) node[anchor=north] (l) {$\tilde{F}_{L_d^e} = F_H^h$};

\path (-1,-2) node[anchor=east] (k) {$\mathbb{F}L$}; \path (5,-2)
node[anchor=east] (l) {$\mathbb{F}L$}; \path (11,-2)
node[anchor=east] (l) {$\mathbb{F}L$};

\path (2.0,-4) node[anchor=north] (k) {$F_L^h$}; \path (8,-4)
node[anchor=north] (l) {$F_L^h$}; \draw[thick,black,|->] (a) -- (b);
\draw[thick,black,|->] (a)  -- (c); \draw[thick,black,|->] (a) --
(d); \draw[thick,black,|->] (b) -- (d); \draw[thick,black,|->] (b)
-- (e); \draw[thick,black,|->] (c) -- (d); \draw[thick,black,|->]
(d)  -- (e);

\draw[thick,black,|->] (cc)  -- (c); \draw[thick,black,|->] (cc)  --
(dd); \draw[thick,black,|->] (dd)  -- (d); \draw[thick,black,|->]
(dd)  -- (ee); \draw[thick,black,|->] (ee)  -- (e);

\end{tikzpicture}
\caption[Correspondence between the exact discrete Lagrangian and
the continuous Hamiltonian flow]{Correspondence between the exact
discrete Lagrangian and the continuous Hamiltonian
flow.}\label{diagram:LEH2}
\end{center}
\end{figure}
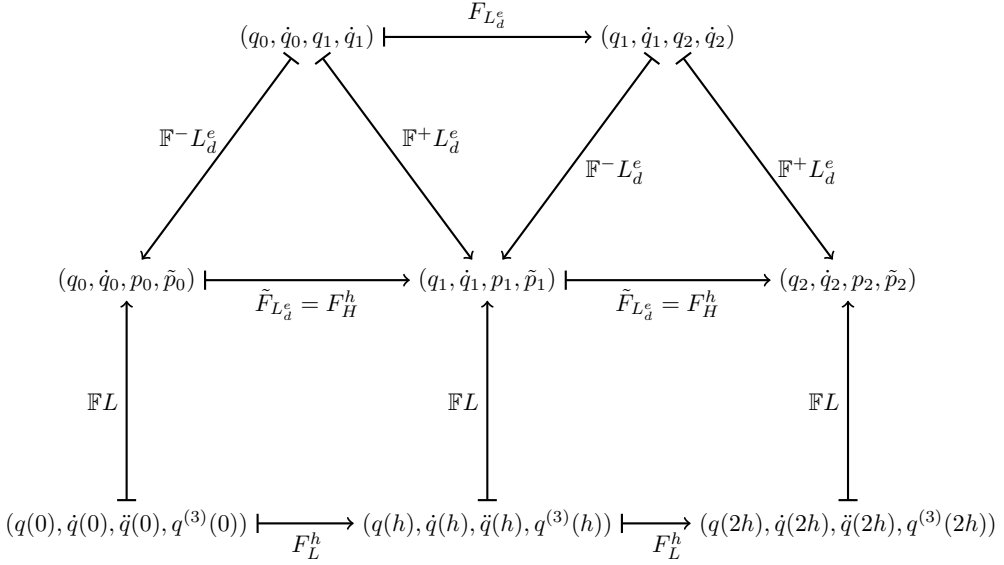

Here, $F_{H}^{h}$ denotes the flow of the Hamiltonian vector field
$X_{H}$ associated with the Hamiltonian $H\colon T^{*}TQ\to\R$ given by
$H=E_{L}\circ(\F L)^{-1}$ where $E_{L}\colon T^{(3)}Q\to\R$ denotes the
energy function associated to $L$ (see \cite{LR1}).

\begin{theorem}\label{Theo3}
Under these conditions we have that $F_H^h = \tilde{F}_{L_d^e}$.
\end{theorem}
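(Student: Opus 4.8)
The plan is to verify the statement by a direct point-chase around the diagram in Figure \ref{diagram:LEH2}, combining Theorem \ref{Theo1} (which identifies the two outer vertical arrows $\F L$) with the standard fact that the Legendre transform intertwines the continuous Lagrangian and Hamiltonian flows. Since the previous theorem guarantees that $L_d^e$ is regular for small $h$, the discrete Legendre transforms $\F^{\pm}L_d^e$ are local diffeomorphisms and the exact discrete flow $F_{L_d^e}$ is well defined, so every arrow in the diagram makes sense. Throughout I fix initial data $(q(0),\dot q(0),\ddot q(0),q^{(3)}(0))\in T^{(3)}Q$, let $q(t)$ be the solution of the second-order Euler--Lagrange equations with this initial data (defined at least on $[0,2h]$ for $h$ small), and abbreviate $P_0=\F L(q(0),\dot q(0),\ddot q(0),q^{(3)}(0))$ and $P_h=\F L(q(h),\dot q(h),\ddot q(h),q^{(3)}(h))$.

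The first and decisive step is to show that the exact discrete flow advances along this single continuous trajectory, namely
\[
F_{L_d^e}\big(q(0),\dot q(0),q(h),\dot q(h)\big)=\big(q(h),\dot q(h),q(2h),\dot q(2h)\big).
\]
By \eqref{relationF}, the candidate triple solves the discrete Euler--Lagrange equations precisely when the matching condition $\F^{+}L_d^e$ of the segment on $[0,h]$ equals $\F^{-}L_d^e$ of the segment on $[h,2h]$ holds. Applying Theorem \ref{Theo1} to the first segment gives $\F^{+}L_d^e(q(0),\dot q(0),q(h),\dot q(h))=P_h$, while applying it to the time-shifted segment on $[h,2h]$ gives $\F^{-}L_d^e(q(h),\dot q(h),q(2h),\dot q(2h))=P_h$ as well. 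These coincide, so the matching condition holds and the sampled triple is a discrete trajectory; regularity of $L_d^e$ then identifies it as the unique image under $F_{L_d^e}$.

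With this in hand the chase is immediate. Theorem \ref{Theo1} gives $(\F^{-}L_d^e)^{-1}(P_0)=(q(0),\dot q(0),q(h),\dot q(h))$; the step above sends this to $(q(h),\dot q(h),q(2h),\dot q(2h))$ under $F_{L_d^e}$; and a final application of $\F^{-}L_d^e$ to the shifted segment returns $P_h$. Hence, by the definition \eqref{DHF} of $\tilde F_{L_d^e}$, we obtain $\tilde F_{L_d^e}(P_0)=P_h$. On the other hand, since $H=E_L\circ(\F L)^{-1}$, the Hamiltonian flow is conjugate to the continuous Euler--Lagrange flow $\Psi_L^h=F_L^h$ through the Legendre transform, that is, $F_H^h=\F L\circ F_L^h\circ(\F L)^{-1}$ (the defining relation between the two pictures for higher-order systems, see \cite{LR1}); as $F_L^h$ carries the time-$0$ data of $q(t)$ to its time-$h$ data, this gives $F_H^h(P_0)=\F L(q(h),\dot q(h),\ddot q(h),q^{(3)}(h))=P_h$. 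Since $\F L$ is a local diffeomorphism onto an open subset of $T^{*}TQ$ and the initial data was arbitrary, we conclude $\tilde F_{L_d^e}=F_H^h$ there.

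I expect the main obstacle to be the first step, the gluing of consecutive exact segments into a single continuous solution. Everything rests on the two appearances of $P_h$ coinciding, and this is exactly where injectivity of $\F L$ (regularity of $L$) is used: matching the momenta at the interface forces the accelerations and jerks $\ddot q(h),q^{(3)}(h)$ computed from either side to agree, so uniqueness for the initial value problem makes the two segments restrictions of one trajectory. A secondary point requiring care is the precise formulation of the conjugacy $F_H^h=\F L\circ F_L^h\circ(\F L)^{-1}$ in the higher-order setting, for which I would invoke the construction of the energy $E_L$ and the Legendre map $\F L$ from \cite{LR1}.
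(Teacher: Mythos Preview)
Your argument is correct and follows essentially the same route as the paper: the paper presents Theorem~\ref{Theo3} without a separate proof, treating it as an immediate reading of the commutative diagram in Figure~\ref{diagram:LEH2}, which itself is assembled from Theorem~\ref{Theo1} and Theorem~\ref{Theo2}. Your write-up simply makes that diagram chase explicit, and in fact supplies a detail the paper glosses over---the ``gluing'' verification that $F_{L_d^e}$ carries $(q(0),\dot q(0),q(h),\dot q(h))$ to $(q(h),\dot q(h),q(2h),\dot q(2h))$---so nothing is missing.
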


\begin{example}\textbf{Cubic splines (cont.)}
Recall that in this example $Q=\R^n$ and $L= \frac{1}{2}\ddot q^2$. Since the exact solutions for the second-order Euler--Lagrange equation for $L$ can be found explicitly, it is easy to show that the discrete exact Lagrangian is
\[
L_d^e(q_0,v_0,q_1,v_1)=\frac{6}{h^3}(q_0-q_1)^2+\frac{6}{h^2}(q_0-q_1)(v_0+v_1)+\frac{2}{h}(v_0^2+v_0v_1+v_1^2).
\]
{}From the corresponding discrete second-order Euler--Lagrange equation, the evolution is
\begin{align*}
q_{k+1}&=5q_{k-1}-4q_k+2h(v_{k-1}+2v_k),\\
v_{k+1}&=v_{k-1}+\frac{2}{h}(q_{k-1}-2q_k+q_{k+1}).
\end{align*}
It is interesting to note that both this exact method and method
\eqref{metodosplineTaylor} preserve the quantity
\[
\varphi(q_k,v_k,q_{k+1},v_{k+1})=\frac{q_{k+1}-q_k}{h}-\frac{v_k+v_{k+1}}{2}.
\]

\begin{figure}[h]
%\setlength\fboxsep{0pt}
%\setlength\fboxrule{0.5pt}
%\fbox{
\includegraphics[trim = 20mm 16mm 20mm 20mm, clip, scale=.5]{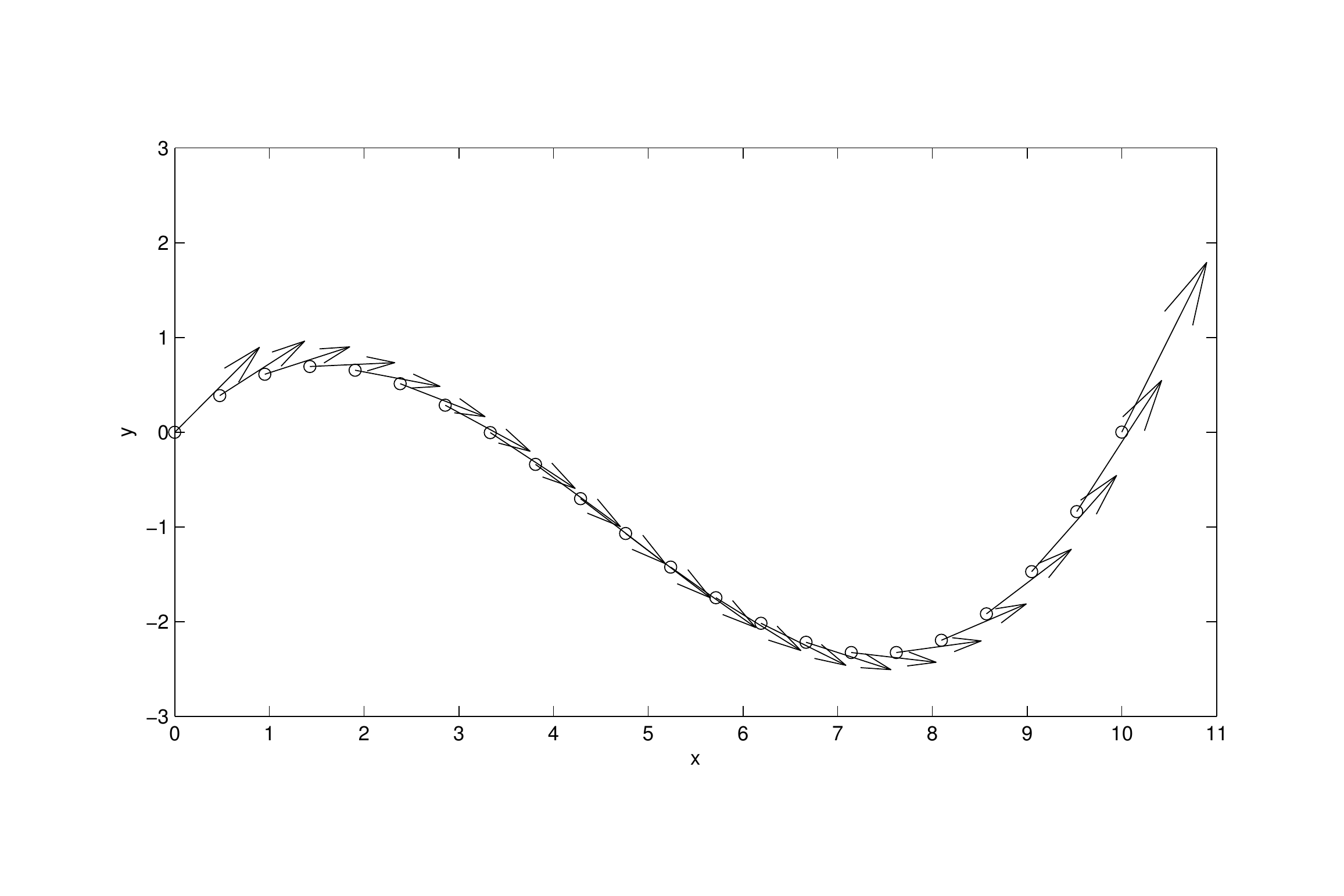}\hspace{4mm}
%}
%\setlength\fboxsep{0pt}
%\setlength\fboxrule{0.5pt}
%\fbox{
\includegraphics[trim = 73mm 0mm 72mm 0mm, clip, scale=.35]{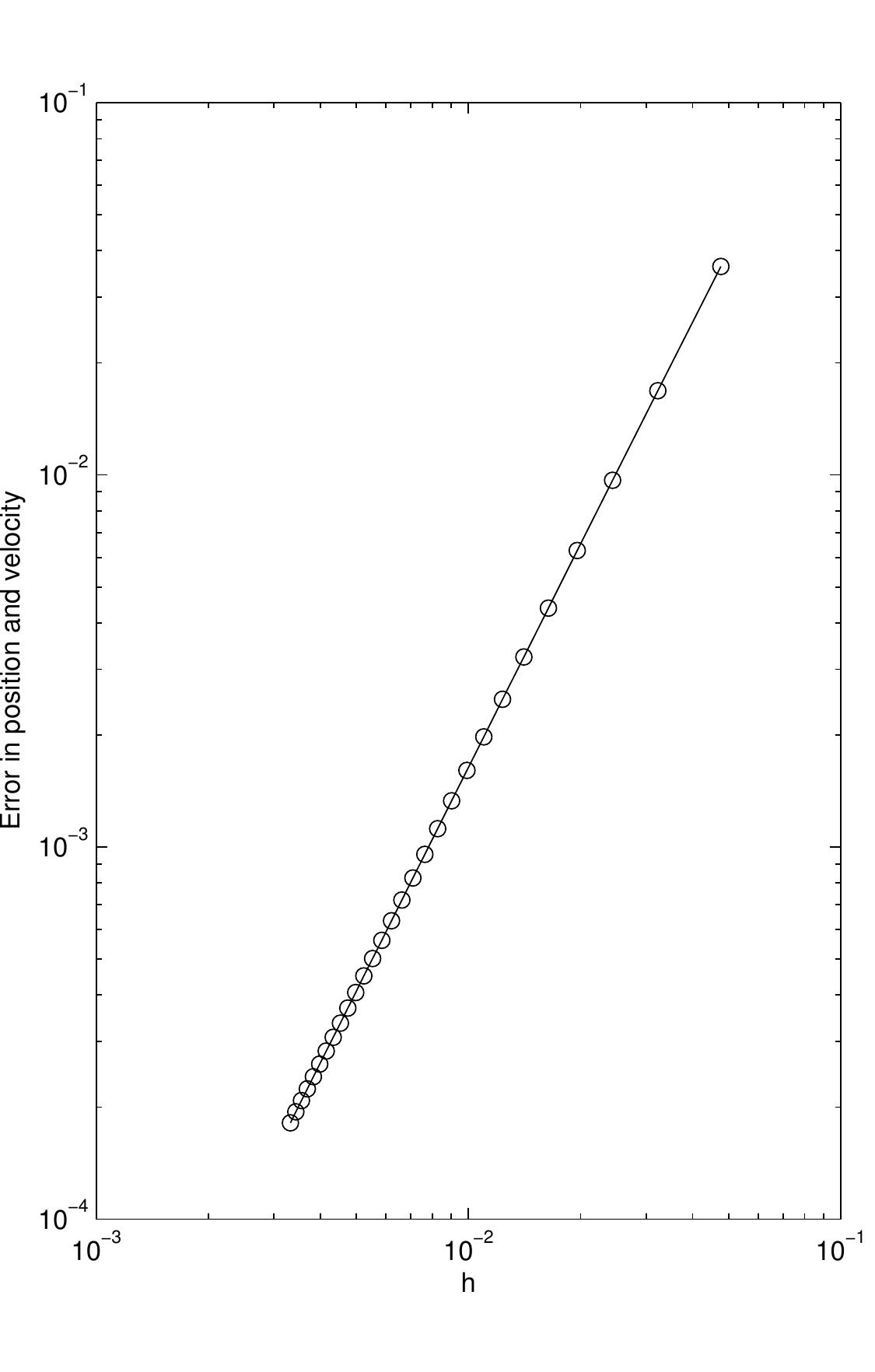}
%}
\caption{Left: simulation of the method \eqref{metodosplineTaylor} with $q_0=(0,0)$ $v_0=(10,10)$, $q_N=(10,0)$, $v_N=(10,20)$, $N=21$, depicting the computed points and velocities in the $xy$-plane (velocities are scaled). Right: Error in position and velocity for different values of $h$.}\label{fig:simulationspline}
\end{figure}
\end{example}

\subsection{Variational error analysis}

Now we rewrite the result of Patrick \cite{patrick} and Marsden and West
\cite{mawest} for the particular case of a Lagrangian
$L_d\colon TQ\times TQ\to\R$.

\begin{definition}
Let $L_{d}\colon TQ\times TQ\to\R$ be a discrete Lagrangian. We say that
$L_{d}$ is a discretization of order $r$ if there exist an
open subset $U_{1}\subset T^{(2)}Q$ with compact closure and
constants $C_1>0$, $h_1>0$ so that

\begin{equation*}%\label{orderlagrangian}
|L_{d}(q(0),\dot{q}(0),q(h),\dot{q}(h),h)-L_{d}^{e}(q(0),\dot{q}(0),q(h),\dot{q}(h),h)|\leq C_{1}h^{r+1}
\end{equation*} for all solutions $q(t)$ of the second-order Euler--Lagrange equations with initial conditions $(q_0,\dot{q}_0,\allowbreak\ddot{q}_0)\in U_1$ and for all $h\leq h_1$.
\end{definition}

Following \cite{mawest, patrick-cuell}, we have the next result about the order of our variational integrator.

\begin{theorem}
If $\widetilde{F}_{L_d}$ is the evolution map of an order $r$
discretization $L_d\colon TQ\times TQ\to\R$ of the exact discrete
Lagrangian $L_d^{e}\colon TQ\times TQ\to\R$, then
\[\widetilde{F}_{L_d}=\widetilde{F}_{L_{d}^{e}}+\mathcal{O}(h^{r+1}).\]
In other words, $\widetilde{F}_{L_d}$ gives an integrator of order
$r$ for $\widetilde{F}_{L_{d}^{e}}=F_{H}^{h}$.
\end{theorem}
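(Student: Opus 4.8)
The plan is to reduce the statement on the evolution maps to the corresponding statement on the discrete Legendre transforms, where the order hypothesis can be applied almost directly. Recall from Corollary \ref{corollaryrelations} that $\widetilde{F}_{L_d}=\mathbb{F}^{+}L_d\circ(\mathbb{F}^{-}L_d)^{-1}$, and that by Theorem \ref{Theo3} the exact method satisfies $\widetilde{F}_{L_d^e}=\mathbb{F}^{+}L_d^e\circ(\mathbb{F}^{-}L_d^e)^{-1}=F_H^h$. It therefore suffices to prove that the discrete Legendre transforms of $L_d$ and $L_d^e$ agree to the right order, $\mathbb{F}^{\pm}L_d=\mathbb{F}^{\pm}L_d^e+\mathcal{O}(h^{r+1})$, and then to control the inversion and the composition.

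The core of the argument is to upgrade the order hypothesis $|L_d-L_d^e|\leq C_1h^{r+1}$, which bounds only values, to the same bound on the first partial derivatives $D_iL_d-D_iL_d^e$ that build the maps $\mathbb{F}^{\pm}$. The obstruction to doing this naively is that in the raw coordinates $(q_0,\dot q_0,q_1,\dot q_1)$ the exact discrete Lagrangian and its derivatives are singular at $h=0$: as the cubic spline example shows, $L_d^e$ itself carries negative powers of $h$ for fixed generic endpoints, and $\mathcal{W}_d$ blows up like $h^{-3}$. The remedy, following \cite{mawest,patrick-cuell}, is to carry out the comparison in the variables of Section \ref{existence}, in which the boundary value problem is regular through $h=0$: parametrizing a short solution segment by its initial data and $h$ (equivalently by the rescaled curve $Q^{[k]}$ produced by the map $\psi$ of Section \ref{existence}), both $L_d$ and $L_d^e$ become jointly smooth in these variables and in $h$, including at $h=0$. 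Writing $E=L_d-L_d^e$ in these variables, the bound $|E|\leq C_1h^{r+1}$ forces the Taylor coefficients of $E$ in $h$ to vanish up to order $r$, so Taylor's theorem gives $E=h^{r+1}\widetilde E$ with $\widetilde E$ smooth on the relevant compact set. Differentiating in the remaining variables then yields $DE=h^{r+1}D\widetilde E=\mathcal{O}(h^{r+1})$, and transporting back to $\mathbb{F}^{\pm}$ gives $\mathbb{F}^{\pm}L_d=\mathbb{F}^{\pm}L_d^e+\mathcal{O}(h^{r+1})$.

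It then remains to pass from the Legendre transforms to the evolution maps. Since $L_d^e$ is regular (preceding theorem), $\mathbb{F}^{-}L_d^e$ is, in the regular variables, a diffeomorphism onto its image with inverse of uniformly bounded derivative; comparing $x=(\mathbb{F}^{-}L_d)^{-1}(y)$ with $(\mathbb{F}^{-}L_d^e)^{-1}(y)$ through the identity $(\mathbb{F}^{-}L_d^e)^{-1}(\mathbb{F}^{-}L_d(x))-(\mathbb{F}^{-}L_d^e)^{-1}(\mathbb{F}^{-}L_d^e(x))=\mathcal{O}(h^{r+1})$ gives $(\mathbb{F}^{-}L_d)^{-1}=(\mathbb{F}^{-}L_d^e)^{-1}+\mathcal{O}(h^{r+1})$. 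Composing this with $\mathbb{F}^{+}L_d=\mathbb{F}^{+}L_d^e+\mathcal{O}(h^{r+1})$ and using the Lipschitz bound for $\mathbb{F}^{+}L_d^e$ yields
\[
\widetilde{F}_{L_d}=\mathbb{F}^{+}L_d\circ(\mathbb{F}^{-}L_d)^{-1}=\mathbb{F}^{+}L_d^e\circ(\mathbb{F}^{-}L_d^e)^{-1}+\mathcal{O}(h^{r+1})=\widetilde{F}_{L_d^e}+\mathcal{O}(h^{r+1}),
\]
which is the claim.

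I expect the genuine difficulty to lie entirely in the promotion step of the second paragraph, and specifically in keeping all estimates uniform as $h\to0$. Because the naive Legendre transforms degenerate at $h=0$, one cannot simply differentiate the value bound; the whole point of the regularization of Section \ref{existence} is that in the rescaled variables the maps extend smoothly through $h=0$, so that the factor $h^{r+1}$ coming from Taylor's theorem survives differentiation and is not cancelled by any negative power of $h$. Once this uniform smoothness is in hand, the inversion and composition estimates are routine, which is why the result is a direct translation of \cite{mawest,patrick-cuell}.
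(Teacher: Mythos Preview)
The paper does not actually prove this theorem: it is stated without proof, prefaced only by ``Following \cite{mawest, patrick-cuell}, we have the next result\ldots''. So there is no paper proof to compare against; the authors are simply invoking the standard variational error analysis of Marsden--West and its refinement by Patrick--Cuell, specialized to the case $M=TQ$.

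Your outline is precisely that standard argument: factor $\widetilde{F}_{L_d}=\mathbb{F}^{+}L_d\circ(\mathbb{F}^{-}L_d)^{-1}$, promote the $\mathcal{O}(h^{r+1})$ bound on $L_d-L_d^e$ to the same bound on the Legendre transforms by passing to variables in which everything is smooth through $h=0$ (so that Taylor's theorem factors out $h^{r+1}$ and differentiation does not reintroduce negative powers of $h$), then handle inversion and composition by uniform Lipschitz estimates. This is exactly the strategy of \cite{mawest} (Theorem~2.3.1 there) with the uniform-in-$h$ smoothness supplied by the regularization of \cite{patrick-cuell}, which in this paper is the content of Section~\ref{existence}. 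Your identification of the genuine difficulty---that the naive Legendre transforms blow up as $h\to 0$, so one must work in the rescaled/regularized variables---is correct and is the reason the paper develops Section~\ref{existence} at all. In short, your proposal is correct and coincides with the approach the paper defers to its references.
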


Note that given a discrete Lagrangian $L_{d}\colon TQ\times TQ\to\R$  its
order can be calculated by expanding the expressions for
$L_d(q(0),\dot{q}(0),q(h),\dot{q}(h), h)$ in a Taylor series in $h$
and comparing this to the same expansions for the exact Lagrangian.
If the series agree up to $r$ terms, then the discrete Lagrangian is
of order $r$.

\section{Application to optimal control of mechanical systems}\label{section5}

In this section we will study how to apply our variational
integrator to optimal control problems. We will study optimal
control problems for fully actuated mechanical systems and we will
show how our methods can be applied to the optimal control of a
robotic leg.

In the following we will assume that all the control systems are
controllable, that is, for any two points $q_0$ and $q_f$ in the
configuration space $Q$, there exists an admissible control $u(t)$
defined on some interval $[0,T]$ such that the system with initial
condition $q_0$ reaches the point $q_f$ at time $T$ (see \cite{Blo}
and \cite{bullolewis} for example).

\subsection{Optimal control of fully actuated systems.}

Let $L\colon TQ\to\R$ be a regular Lagrangian and take local
coordinates $(q^{A})$ on $Q$ where $1\leq A\leq n$.  For this
Lagrangian the \textit{controlled  Euler--Lagrange equations} are
\begin{equation}\label{E-L}
\frac{d}{dt}\frac{\partial L}{\partial \dot{q}^{A}}-\frac{\partial L}{\partial q^{A}}=u_{A},
\end{equation} where $u=(u_{A})\in U\subset\mathbb{R}^{n}$ is an open subset of $\R^n$, the
set of control parameters.

The optimal control problem consists in finding a trajectory of the
 state variables and control inputs $(q^{(A)}(t),u^{A}(t))$
 satisfying \eqref{E-L} given initial and final conditions
 $(q^{A}(t_0),\dot{q}^{A}(t_0))$, $(q^{A}(t_f),\allowbreak\dot{q}^{A}(t_f))$
 respectively, minimizing the cost function
 \[\mathcal{A}=\int_{t_0}^{t_f} C(q^{A},\dot{q}^{A},u_{A})dt,\] where $C\colon TQ\times U\to\R$.

 From \eqref{E-L} we can rewrite the cost function as a second-order
 Lagrangian $\widetilde{L}\colon T^{(2)}Q\to\R$ given by \[\widetilde{L}(q^{A},\dot{q}^{A},\ddot{q}^{A})=C\left(q^{A},\dot{q}^{A},\frac{d}{dt}\frac{\partial L}{\partial\dot{q}^{A}}-\frac{\partial L}{\partial q^{A}}\right)\]
replacing the controls by the Euler--Lagrange equations in the
cost function (see \cite{Blo} for example).

Suppose that $Q=\R^n$. Then we can define a discretization of the Lagrangian $\widetilde{L}\colon T^{(2)}Q\to\R$ by a discrete Lagrangian $\widetilde{L}_d\colon TQ\times
TQ\to\R$,
\begin{align*}
\widetilde{L}_d(q_{k},v_k,q_{k+1},v_{k+1})&=\frac{h}{2}\widetilde{L}\left(\frac{q_k+q_{k+1}}{2},\frac{v_k+v_{k+1}}{2},\frac{2}{h^2}(q_{k+1}-q_{k}-hv_{k})\right)\\
&+\frac{h}{2}\widetilde{L}\left(\frac{q_k+q_{k+1}}{2},\frac{v_k+v_{k+1}}{2},\frac{2}{h^2}(q_{k}-q_{k+1}+hv_{k+1})\right).
\end{align*}
In the first term, we have computed an approximate value of the acceleration $a_k$ by using the Taylor expansion $q_{k+1}\approx q_k+hv_k+\frac{h^2}{2}a_k$. For the second term, we have approximated $a_{k+1}$ using $q_{k}\approx q_{k+1}-hv_{k+1}+\frac{h^2}{2} a_{k+1}$, as in Example \ref{example_cubic_splines}.

Other natural possibilities for $\widetilde{L}_d$ are, for instance,
\begin{align*}
\widetilde{L}_d(q_k,v_k,q_{k+1},v_{k+1})&=hL\left(\frac{q_k+q_{k+1}}{2},\frac{q_{k+1}-q_k}{h},\frac{v_{k+1}-v_k}{h}\right)\intertext{or}
\widetilde{L}_d(q_k,v_k,q_{k+1},v_{k+1})&=\frac{1}{2}L\left(q_k,v_k,\frac{v_{k+1}-v_{k}}{h}\right)+\frac{1}{2}L\left(q_{k+1},v_{k+1},\frac{v_{k+1}-v_{k}}{h}\right).
\end{align*}

Applying the results given in Section \ref{section2}, we know that
the minimizers of the cost function are obtained by solving the
discrete second-order Euler--Lagrange equations
\begin{align*}
D_1\widetilde{L}_d(q_k,v_k,q_{k+1},v_{k+1})+D_3\widetilde{L}_d(q_{k-1},v_{k-1},q_k,v_k)&=0,\\
D_2\widetilde{L}_d(q_k,v_k,q_{k+1},v_{k+1})+D_4\widetilde{L}_d(q_{k-1},v_{k-1},q_{k},v_k)&=0.
\end{align*}

If the matrix  \[\left(
                 \begin{array}{cc}
                   D_{13}\widetilde{L}_d &  D_{14}\widetilde{L}_d \\
                    D_{23}\widetilde{L}_d &  D_{24}\widetilde{L}_d \\
                 \end{array}
               \right)\]
is regular, then one can define the discrete Lagrangian map to solve the optimal control
problem.

\begin{example}\textbf{Two-link manipulator}

We consider the optimal control of a two-link manipulator which is a
classical example studied in robotics (see for example \cite{Murray}
and \cite{Sina}). The two-link manipulator consists of two coupled
(planar) rigid bodies with mass $m_i$, length $l_i$ and moments of inertia with respect to the joints
$J_i$, with $i = 1, 2$, respectively.

\begin{figure}[h]
%\setlength\fboxsep{0pt}
%\setlength\fboxrule{0.5pt}
%\fbox{
\includegraphics[scale=.8]{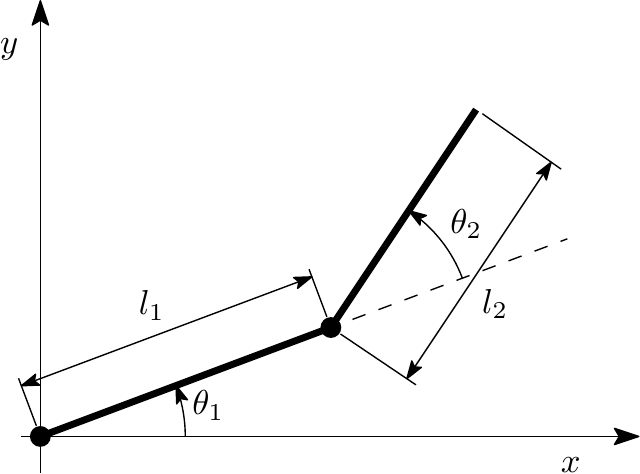}
%}
\caption{Two-link manipulator}\label{fig:two-link}
\end{figure}

Let $\theta_1$ and $\theta_2$
be the configuration angles measured as in Figure~\ref{fig:two-link}. If we assume one end of the first link to be
fixed in an inertial reference frame, the configuration of the
system is locally specified by the coordinates $(\theta_1,
\theta_2)\in\mathbb{S}^{1}\times\mathbb{S}^{1}$. The Lagrangian is given by the kinetic energy of the system minus the
potential energy, that is,
\begin{multline*}
L(q,\dot{q})=\frac{1}{8}(m_{1}+4m_{2})l_{1}^{2}\dot{\theta}_{1}^{2}+\frac{1}{8}m_{2}l_{2}^{2}(\dot{\theta}_{1}+\dot{\theta}_{2})^{2}+\frac{1}{2}m_{2}l_{1}l_{2}\cos(\theta_2)\dot{\theta}_{1}(\dot{\theta}_{1}+\dot{\theta}_2)+\frac{1}{2}J_{1}\dot{\theta}_{1}^{2}\\
\quad+\frac{1}{2}J_{2}(\dot{\theta}_{1}+\dot{\theta}_{2})^{2}+g\left(\frac{1}{2}m_{1}l_{1}\sin\theta_{1}+m_{2}l_{1}\sin\theta_{1}+\frac{1}{2}m_{2}l_{2}(\theta_1+\theta_{2})\right),
\end{multline*} where $g$ is the constant gravitational acceleration.

Control torques $u_{1}$ and $u_{2}$ are applied at the base of
the first link and at the joint between the two links. The equations of
motion of the controlled system are \begin{align*}
u_{1}&=-\sin\theta_2l_1l_2m_2\dot{\theta}_{2}\dot{\theta}_{1}-\frac{1}{2}\sin\theta_{2}\dot{\theta}_{2}^{2}l_1l_2m_{2}+\frac{1}{2}m_2l_2\cos(\theta_1+\theta_2)g\\
&\quad+\left(m_2g\cos\theta_1+\frac{1}{2}g\cos\theta_{1}m_1\right)l_{1}+\left(\frac{1}{4}m_2l_2^{2}+J_2+\frac{1}{2}\cos\theta_2l_1l_2m_2\right)\ddot{\theta}_{2}\\
&\quad+\left(\cos\theta_2l_1l_2m_2+\left(\frac{m_1}{4}+m_2\right)l_{1}^{2}+\frac{m_{2}l_{2}^{2}}{4}+J_{1}+J_2\right)\ddot{\theta}_{1},\\
u_2&=\frac{1}{2}\sin\theta_2l_1l_2m_2\dot{\theta}_{1}^{2}+\left(\frac{1}{4}m_2l_2^{2}+J_2+\frac{1}{2}\cos\theta_{2}l_{1}l_{2}m_{2}\right)\ddot{\theta}_{1}\\
&\quad+\frac{1}{2}m_{2}l_{2}\cos(\theta_1+\theta_2)g+\left(\frac{1}{4}m_{2}l_{2}^{2}+J_{2}\right)\ddot{\theta}_{2}.
\end{align*}

We look for  trajectories $(\theta_{1}(t),\theta_{2}(t), u(t))$ of
the state variables and control inputs for given initial and final
conditions, that is, for given values of $(\theta_{1}(0),\theta_{2}(0), \dot{\theta}_1(0),
\dot\theta_2(0))$ and
 $(\theta_1(T), \theta_2(T), \allowbreak \dot\theta_1(T), \dot\theta_2(T))$, and minimizing the cost functional
\[\mathcal{A} = \frac{1}{2}\int_{0}^{T} (u_{1}^{2}+u_{2}^{2})\,dt.\]

We construct the discrete Lagrangian
$\widetilde{L}_d\colon T(\mathbb{S}^{1}\times\mathbb{S}^{1})\times
T(\mathbb{S}^{1}\times\mathbb{S}^{1})\to\R$, discretizing the
Lagrangian $\displaystyle{\widetilde{L}\colon T^{(2)}(\mathbb{S}^{1}\times\mathbb{S}^{1})\to\R}$ given by
\begin{align*}
\widetilde{L}(\theta_1,\theta_2,\dot{\theta}_1,\dot{\theta}_2,\ddot{\theta}_1,\ddot{\theta}_2)&=\frac{1}{2}\left[\frac{1}{2}\sin\theta_2l_1l_2m_2\dot{\theta}_{1}^{2}+\left(\frac{1}{4}m_2l_2^{2}+J_2+\frac{1}{2}\cos\theta_{2}l_{1}l_{2}m_{2}\right)\ddot{\theta}_{1}\right.\\
&\quad+\left.\frac{1}{2}m_{2}l_{2}\cos(\theta_1+\theta_2)g+\left(\frac{1}{4}m_{2}l_{2}^{2}+J_{2}\right)\ddot{\theta}_{2}\right]^{2}\\
&\quad+\frac{1}{2}\left[\frac{1}{2}\sin\theta_2l_1l_2m_2\dot{\theta}_{1}^{2}+\left(\frac{1}{4}m_2l_2^{2}+J_2+\frac{1}{2}\cos\theta_{2}l_{1}l_{2}m_{2}\right)\ddot{\theta}_{1}
\right.\\
&\quad+\left.\frac{1}{2}m_{2}l_{2}\cos(\theta_1+\theta_2)g+\left(\frac{1}{4}m_{2}l_{2}^{2}+J_{2}\right)\ddot{\theta}_{2}
\right]^{2}
\end{align*}
taking the same discretization as in equation \eqref{discrete
Taylor} to approximate the acceleration and taking midpoint averages
to approximate the position and velocity.

Figures \ref{fig:manipulator-position} and \ref{fig:manipulator-3d} show the results from a numerical simulation of the method, taking the system from the stable mechanical equilibrium $(\theta_{1}(0),\theta_{2}(0), \dot{\theta}_1(0),
\dot\theta_2(0))=(-\pi/2,0,0,0)$ to the unstable equilibrium  $(\theta_{1}(T),\theta_{2}(T), \dot{\theta}_1(T),
\dot\theta_2(T))=(\pi/2,0,0,0)$. We have used $T=10$, $N=1000$, $m_1=0.375$, $m_2=0.25$, $l_1=1.5$, $l_2=1$, $J_1=\frac{m_1l_1^2}{3}$,  $J_2=\frac{m_2l_2^2}{3}$, and $g=9.8$. In addition, the reader can find a video of the simulation in \url{www.youtube.com/watch?v=ZUUH0596a30}.
The algorithm generates a sequence of velocities as well as positions, but we represent only the positions in the figures.
\begin{figure}[h]
%\setlength\fboxsep{0pt}
%\setlength\fboxrule{0.5pt}
%\fbox{
\includegraphics[trim = 7mm 0mm 0mm 0mm, clip, scale=.7]{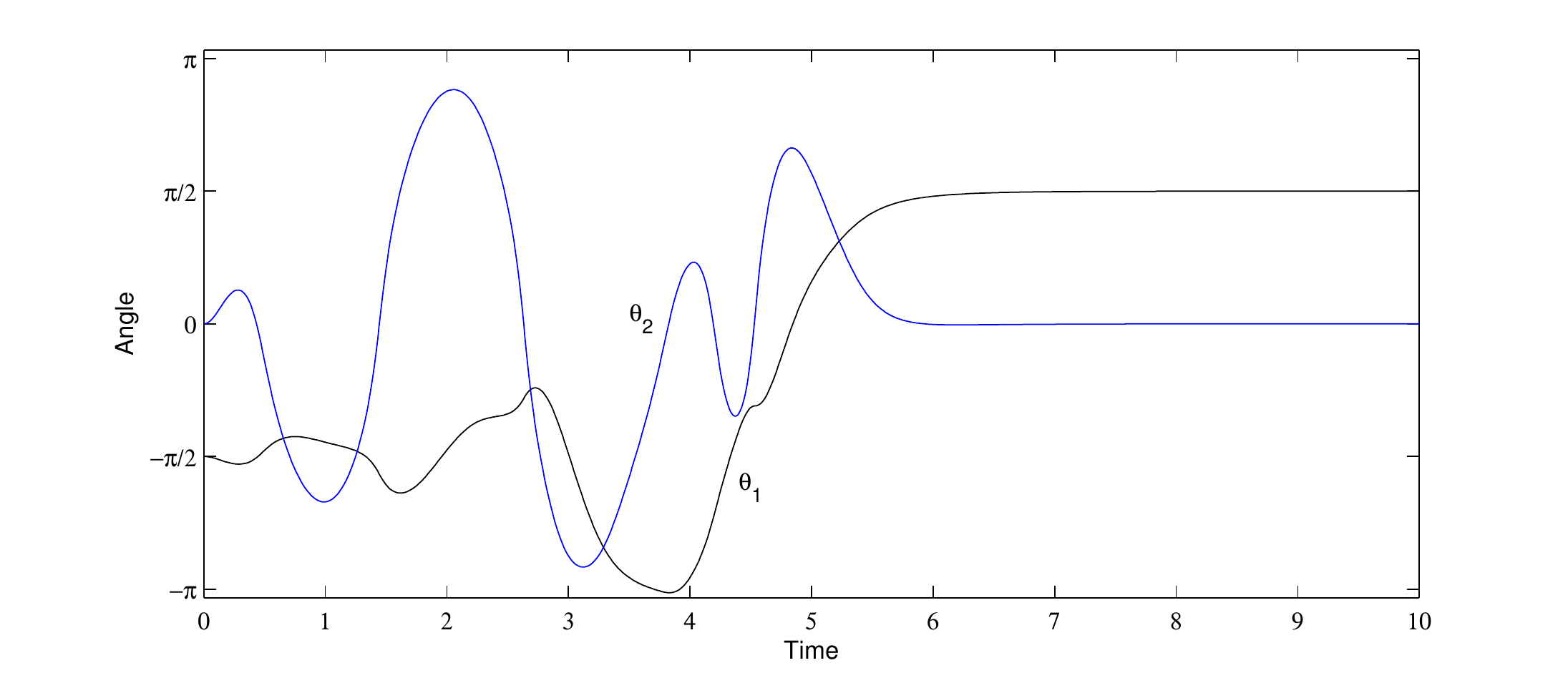}
%}
\caption{Angles $\theta_1$ and $\theta_2$ for the optimal control of the two-link manipulator. Initially, the two links point downwards; at $T=10$ they point upwards.} \label{fig:manipulator-position}
\end{figure}
\begin{figure}
%\setlength\fboxsep{0pt}
%\setlength\fboxrule{0.5pt}
%\fbox{
\includegraphics[trim = 85mm 10mm 85mm 10mm, clip, scale=.6]{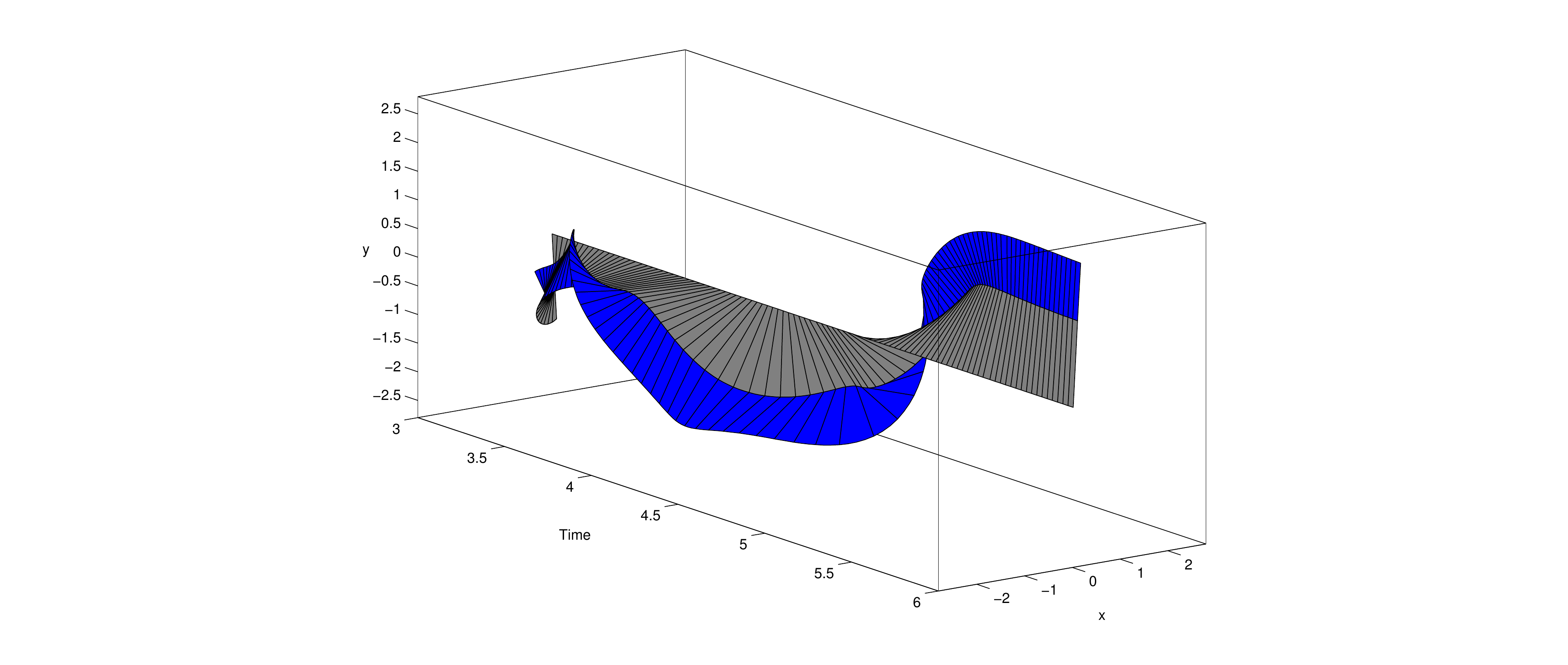}
%}
\caption{Evolution of the actual
 position of the two-link manipulator (detail for $t\in[3,6]$). Sections of this surface with the vertical plane $t=t_0$ show the two links as they are positioned at time $t_0$.}\label{fig:manipulator-3d} 
\end{figure}

We have also considered a different setting where the angle $\theta_2$ is restricted to move between 0 and 170 degrees, inspired by an elbow joint. This range of motion is enforced by adding a continuous, piecewise linear function $V(\theta_2)$ to the cost function, with slope $-1000$ for $\theta_2<0^\circ$, $0$ for $0^\circ<\theta_2<170^\circ$, and $1000$ for $\theta_2>170^\circ$. We  simulated the optimal trajectory with the same endpoint conditions and physical parameters as above, with $N=200$. A video of the resulting motion can be found in \url{www.youtube.com/watch?v=OxOFHdT7emQ}.

\end{example}

\section*{Conclusions and future research}
In this paper we design variational integrators for
higher-order variational systems and their application to optimal
control problems. The general idea for those variational integrators
is to directly discretize Hamilton's principle rather than the
equations of motion in a way that preserves the original system
invariants, notably the symplectic form and, via a discrete version
of Noether's theorem, the momentum map.

We show that a regular higher-order Lagrangian
system has a unique solution for given nearby endpoint conditions
using a direct variational proof of existence and uniqueness for the
local boundary value problem using a regularization procedure
assuming only $C^k$ differentiability (instead of $C^{2k}$ as in
standard ODE theory).

We have seen that taking a discrete Lagrangian
function $L_d\colon  T^{(k-1)}Q\times T^{(k-1)}Q \to{\mathbb R}$ we obtain
the appropriate approximation of the action $ \int^h_0 L(q, \dot{q},
\ldots, q^{(k)})\, dt$. Moreover, we derive a  particular choice of
discrete Lagrangian which gives an exact correspondence between
discrete and continuous systems, the exact discrete Lagrangian. We
show that if the original Lagrangian is regular then it is also the
exact discrete Lagrangian and how is the relation between the
discrete Legendre transformations with the continuous one. 

As future research, we are interested in the
construction of an exact discrete Lagrangian function for higher-order mechanical systems subject to higher-order constraints. The main point will be to show the existence and uniqueness of solutions for the boundary value problem for higher-order systems subject to higher-order constraints. After it, one could define the exact discrete Lagrangian for constrained systems in a similar way that the ones shown in this work. Since optimal control problems for the class of under actuated mechanical systems can be seen as constrained higher-order variational problems, the extension of the constructions given in this work, can be useful to new developments in the field of geometric integration for optimal control problems.  The case of optimal control of nonholonomic systems will be developed.

\section*{Appendix: a technical result for section 2}\label{appendix}
Let $E$ be the kernel of $g$, where
$g=(g_0,\ldots,g_{k-1})\colon C^{l}([0,1],\R^{n})\to (\R^{n})^{k}$ and $g_{j}[\cdot]=\langle b_{j}^{[k]},\cdot\rrangle$. In the context of section \ref{solregularized}, $E$ is the tangent space of the constraint set defined using the linear constraints $g_{j}$, and $l$ is either $0$ or $k$. 

In this Appendix we show that the orthogonal complement of $E$ is the space
$F$ of $\R^{n}$-valued polynomials of degree at
most $k-1$,
\[F=\hbox{span}_{\R^{n}}(b_{0}^{[k]},\ldots,b_{k-1}^{[k]})=\{c^{j}b_{j}^{[k]}|c^{0},\ldots,c^{k-1}\in\R^{n}\},\]
where $b_j^{[k]}$, $j=0,\ldots,k-1$, is a basis of the space of real-valued polynomials of degree at most $k-1$ consisting of orthonormal polynomials on $[0,1]$.

\begin{lemma}
$F=E^{\perp}$, where the orthogonal complement is taken with
respect to the inner product $\llangle\cdot,\cdot\rrangle$ in $C^{l}([0,1],\R^{n})$.
\end{lemma}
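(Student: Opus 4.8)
The plan is to establish the two inclusions $F\subseteq E^{\perp}$ and $E^{\perp}\subseteq F$ separately, using throughout the orthonormality $\langle b_i^{[k]},b_j^{[k]}\rangle=\delta_{ij}$ of the scalar polynomials on $[0,1]$. The inclusion $F\subseteq E^{\perp}$ is routine: I would take an arbitrary $W=c^jb_j^{[k]}\in F$ with $c^0,\ldots,c^{k-1}\in\R^{n}$ and an arbitrary $V\in E=\Ker g$, and compute directly
\[
\llangle V,W\rrangle=\int_0^1 V(u)\cdot\sum_{j=0}^{k-1}c^jb_j^{[k]}(u)\,du=\sum_{j=0}^{k-1}c^j\cdot\langle b_j^{[k]},V\rrangle=\sum_{j=0}^{k-1}c^j\cdot g_j[V]=0,
\]
since $g_j[V]=0$ for every $j$.

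The substantive direction is $E^{\perp}\subseteq F$. Given $W\in E^{\perp}$, I would form its $F$-component
\[
P_FW=\sum_{j=0}^{k-1}\langle b_j^{[k]},W\rrangle\,b_j^{[k]}\in F,
\]
which is well defined because $F$ is finite-dimensional and the $b_j^{[k]}$ form an orthonormal family, and then set $\tilde W=W-P_FW$. Using orthonormality, a short computation gives, for each $i=0,\ldots,k-1$,
\[
g_i[\tilde W]=\langle b_i^{[k]},W\rrangle-\sum_{j=0}^{k-1}\langle b_j^{[k]},W\rrangle\,\langle b_i^{[k]},b_j^{[k]}\rangle=\langle b_i^{[k]},W\rrangle-\langle b_i^{[k]},W\rrangle=0,
\]
so $\tilde W\in E$. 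On the other hand, $P_FW\in F\subseteq E^{\perp}$ by the first inclusion, and since $E^{\perp}$ is a linear subspace, $\tilde W=W-P_FW\in E^{\perp}$ as well. Hence $\tilde W\in E\cap E^{\perp}$.

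The final step, and the only place positivity of the inner product enters, is to conclude $\tilde W=0$: from $\tilde W\in E\cap E^{\perp}$ we obtain $\llangle\tilde W,\tilde W\rrangle=\int_0^1|\tilde W(u)|^2\,du=0$, and continuity of $\tilde W$ forces $\tilde W\equiv 0$, whence $W=P_FW\in F$. I do not expect a genuine obstacle here. The one point requiring care is that $C^{l}([0,1],\R^{n})$ is not complete, so one cannot invoke abstract Hilbert-space orthogonal decompositions; however, the argument sidesteps this entirely, since $E\cap E^{\perp}=\{0\}$ holds in \emph{any} inner product space and $P_F$ is built from a finite orthonormal family, so no completeness or closedness of $E$ is needed. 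The only bookkeeping to watch is keeping the three pairings $\langle\cdot,\cdot\rangle$, $\langle\cdot,\cdot\rrangle$ and $\llangle\cdot,\cdot\rrangle$ distinct, since the coefficients $c^j$ and the projection $P_FW$ are $\R^{n}$-valued while the $b_j^{[k]}$ are scalar.
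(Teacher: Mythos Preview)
Your proof is correct and follows essentially the same approach as the paper: both establish $F\perp E$ via the identity $\llangle c^jb_j^{[k]},V\rrangle=\sum_j c^j\cdot g_j[V]$, and both use the projection $W\mapsto W-\sum_j\langle b_j^{[k]},W\rrangle\,b_j^{[k]}$, checking by orthonormality that it lands in $E$. The only organizational difference is that the paper packages the result as the orthogonal direct-sum decomposition $C^{l}([0,1],\R^{n})=E\oplus F$ (which is what is actually invoked in Section~\ref{solregularized} to define the projection $P$), while you argue the inclusion $E^{\perp}\subseteq F$ directly via $E\cap E^{\perp}=\{0\}$ and positivity of $\llangle\cdot,\cdot\rrangle$; the paper mentions nondegeneracy as an alternative but gives instead a direct coefficient argument for $E\cap F=\{0\}$.
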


\begin{proof} We will prove that $E$ and $F$ are orthogonal (with zero intersection) and that their sum is the whole space $C^{l}([0,1],\R^{n})$.

Let $e\in E$ and $c^{j}b_{j}^{[k]}\in F$. 
\begin{align*}
\llangle c^{j}b_{j}^{[k]},e\rrangle &=\int_{0}^{1}(c^{j}b_{j}^{[k]}(u))\cdot e(u)\,du=\sum_{i=1}^{n}\int_{0}^{1}c_{i}^{j}b_{j}^{[k]}(u)e_{i}(u)du\\
&=c^{j}\cdot\left(\int_{0}^{1}b_{j}^{[k]}(u)e_{1}(u),\ldots,\int_{0}^{1}b_{j}^{[k]}(u)e_{n}(u)\right)\\
&=c^{j}\cdot\langle b_{j}^{[k]},e\rrangle=c^{j}\cdot g_{j}[e]=0,
\end{align*}
since $e\in E=\Ker g$.

The fact that $E\cap F=\{0\}$ can be obtained either by using that the inner product is nondegenerate or directly as follows. Take $e\in E\cap F$, so $e=c^{j}b_{j}^{[k]}$. For all $j'$, we have $0=g_{j'}[e]=\langle b_{j'}^{[k]},c^{j}b_{j}^{[k]}\rrangle=c^{j'}$, which means that $e=0$. 

Finally, take $e\in C^{l}([0,1],\R^{n})$. Write
\[e=e-\sum_{j=0}^{k-1}\langle b_{j}^{[k]},e\rrangle b_j^{[k]}+\sum_{j=0}^{k-1}\langle b_{j}^{[k]},e\rrangle b_j^{[k]}.\]
The third term is in $F$. The remaining part of the right-hand side is in $E$ since for all $j'$,
\[
\Big\langle b_ {j'},e-\sum_{j=0}^{k-1}\langle b_{j}^{[k]},e\rrangle b_j^{[k]}\Bigrrangle=\langle
b_{j'},e\rrangle-\sum_{j=0}^{k-1}\delta_{j'j}\langle
b_j,e\rrangle =0.\]
Therefore $C^{l}([0,1],\R^{n})=E+F$. From the first part of the proof, we obtain that there is an orthogonal decomposition $C^{l}([0,1],\R^{n})=E\oplus F$.
\end{proof}

\section*{Acknowledgments} We would like to thank Juan Carlos Marrero for useful comments and discussions.
\bibliographystyle{AIMS}
\bibliography{refs}

\end{document}